\newcommand{\del}{\delta}
\newcommand{\kap}{\kappa}
\newcommand{\lam}{\lambda}
\renewcommand{\phi}{\varphi}
\newcommand{\sig}{\sigma}
\newcommand{\Del}{\Delta}
\newcommand{\R}{{\mathbb R}}
\newcommand{\M}{{\mathsf M}}
\newcommand{\od}{{\overline d}}
\newcommand{\lfl}{\left\lfloor}
\newcommand{\rfl}{\right\rfloor}
\newcommand{\<}{\langle}
\renewcommand{\>}{\rangle}
\newcommand{\longc}{,\dotsc,}
\newcommand{\longp}{+\dotsb+}
\newcommand{\longe}{=\dotsb=}
\newcommand{\longge}{\ge\dotsb\ge}
\newcommand{\est}{\varnothing}
\newcommand{\seq}{\subseteq}
\newcommand{\stm}{\setminus}
\newcommand{\sub}[1]{_{\substack{#1}}}
\newcommand{\lmax}{\lam_{\max}}
\theoremstyle{plain}
\newtheorem{lemma}{Lemma}
\newtheorem{claim}{Claim}
\newtheorem{theorem}{Theorem}
\newtheorem{corollary}{Corollary}
\newtheorem{primetheorem}{Theorem}
\newcommand{\refe}[1]{\eqref{e:#1}}
\newcommand{\refc}[1]{\ref{c:#1}}
\newcommand{\refl}[1]{\ref{l:#1}}
\newcommand{\refp}[1]{\ref{p:#1}}
\newcommand{\refs}[1]{\ref{s:#1}}
\newcommand{\reft}[1]{\ref{t:#1}}
\author{Vsevolod F. Lev}
\address{Department of Mathematics, The University of Haifa at Oranim,
  Tivon 36006, Israel}
\email{seva@math.haifa.ac.il}
\title[Largest eigenvalue and bi-average degree]%
  {The largest eigenvalue \\ and bi-average degree of a graph}
\begin{document}
\baselineskip=16pt

\begin{abstract}
We show that for a graph $G$ with the vertex set $V$ and the largest
eigenvalue $\lmax(G)$, letting
  $$ \M(G):=\max_{\est\ne X,Y\seq V} \frac{e(X,Y)}{\sqrt{|X||Y|}} $$
(where $e(X,Y)$ denotes the number of edges between $X$ and $Y$), we have
  $$ \textstyle
         \M(G) \le \lmax(G) \le \big(\frac14\log|V|+1\big)\,\M(G). $$
Here the lower bound is attained if $G$ is regular or bi-regular, whereas the
logarith\-mic factor in the upper bound, conjecturally, can be improved
--- although we present an example showing that it cannot be replaced with a
factor growing slower than $(\log |V|/\log\log|V|)^{1/8}$.

Further refinements are established, particularly in the case where $G$ is
bipartite.
\end{abstract}

\maketitle

\section{Background and summary of results: the general case}

For a graph $G$, by $\lmax(G)$ we denote the largest eigenvalue of $G$ (often
referred to as the \emph{first} eigenvalue), by $\od(G)$ the average degree
of $G$, and by $\Del(G)$ the maximum degree of $G$. All graphs throughout are
simple.

It is well-known and easy to prove that if $d$ is the degree sequence of a
graph $G$, then, denoting by $\|d\|_2$ the $\ell^2$-norm of $d$, we have
\begin{equation}\label{e:lmax-bounds}
  \|d\|_2 \le \lmax(G) \le \Del(G);
\end{equation}
in particular, if $G$ is $r$-regular, then $\lmax(G)=r$. This basic
observation determines completely the meaning of the largest eigenvalue for
regular graphs and indeed, for all graphs which are ``nearly regular'' in the
sense that, say, the maximum degree does not exceed a constant multiple of
the average degree.

In this paper we investigate the general case, where a significant gap
between the $\ell^2$-norm of the degree sequence and the maximum degree can
potentially exist, with the ultimate goal to understand the relation between
the largest eigenvalue of a graph and its degree sequence in this case.

We notice that one can expect the largest eigenvalue to reflect mostly the
properties of the degree sequence, and to a much lesser extent the structure
of the graph itself, as there exist graphs with the same number of vertices
and the same largest eigenvalue which do not look similar --- as, for
instance, all graphs $K_{n_1,n_2}\cup{\overline K}_{n-(n_1+n_2)}$ with the
product $n_1n_2$ and integer $n>n_1+n_2$ fixed.

The last example (more generally, the fact that the largest eigenvalue is
monotonic
% Reference?
by the interlacing theorem, while the degree sequence is easy to manipulate,
say, by adding isolated vertices), suggests that it is insufficient for our
purposes to confine to the graph itself. Instead, we have to bring into
consideration the whole family of its induced subgraphs; more precisely, of
the induced subgraphs of its bipartite double cover. Specifically, suppose
that $G$ is a graph on the vertex set $V$, and let $X,Y\seq V$ be non-empty
sets of vertices. Consider the subgraph $G_{X,Y}$ of the bipartite double
cover $G\times K_2$, induced by $X$ and $Y$ (or rather copies thereof, taken
in different partite sets). Thus, $G_{X,Y}$ is a bipartite graph with
disjoint copies of $X$ and $Y$ as the partite sets, and with the number of
edges equal to the number of edges between $X$ and $Y$ in $G$. Using the
standard notation $e(X,Y)$ for this number of edges, the average degree of a
vertex from $X$ in $G_{X,Y}$ is $e(X,Y)/|X|$, and the average degree in
$G_{X,Y}$ of a vertex from $Y$ is $e(X,Y)/|Y|$. The geometric mean of these
averages, which is $e(X,Y)/\sqrt{|X||Y|}$, can thus be considered as a
measure of the average degree of $G_{X,Y}$. We give this measure a designated
name, defining the \emph{bi-average degree} of a bipartite graph with the
partite sets $U$ and $W$ to be $e(U,W)/\sqrt{|U||W|}$. Therefore, the
quantity
  $$ \M(G) := \max_{\est\ne X,Y\seq V} \frac{e(X,Y)}{\sqrt{|X||Y|}} $$
can be interpreted as the maximum bi-average degree of an induced subgraph of
the bipartite double cover of $G$.

We notice that for any graph $G$ we have
\begin{equation}\label{e:M-bounds}
  \od(G) \le \M(G) \le \Del(G),
\end{equation}
the lower bound following from the fact that, denoting by $V$ the vertex set
of $G$, the bi-average degree of the whole bipartite double cover $G\times
K_2$ is $\frac{e(V,V)}{|V|}=\od(G)$, and the upper bound from
  $$ e(X,Y) \le \min\big\{|X|\,\Del(G),|Y|\,\Del(G)\big\}
                          \le \Del(G)\sqrt{|X||Y|},\quad X,Y\seq V. $$
As a consequence of \refe{M-bounds}, if $G$ is $r$-regular, then $\M(G)=r$.

With this notation, we can state our main results (to be proved in subsequent
sections).
\begin{theorem}\label{t:main-general}
If $G$ is a graph with the vertex set $V$, then
  $$ \textstyle
         \M(G) \le \lmax(G) \le \big(\frac14\log|V|+1\big)\,\M(G). $$
\end{theorem}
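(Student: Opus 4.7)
My plan is to use Perron--Frobenius: since the adjacency matrix $A:=A(G)$ is symmetric and entry-wise non-negative, its spectral radius equals $\lmax(G)$, which for a symmetric matrix is also its operator norm. Testing this on the unit vectors $\alpha:=\mathbf{1}_X/\sqrt{|X|}$ and $\beta:=\mathbf{1}_Y/\sqrt{|Y|}$ for arbitrary non-empty $X,Y\seq V$ gives $e(X,Y)/\sqrt{|X||Y|}=\alpha^TA\beta\le\lmax(G)$, and taking the supremum over $X,Y$ yields $\M(G)\le\lmax(G)$.

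\textbf{Upper bound via layer-cake.} For the upper bound I will take a non-negative Perron eigenvector $v$ with $\|v\|_2=1$, so that $\lmax(G)=v^TAv$, and apply a layer-cake decomposition. Setting $X_t:=\{x\in V:v_x>t\}$ and using $v_x=\int_0^\infty\mathbf{1}[x\in X_t]\,dt$, I obtain
  $$v^TAv=\sum_{x,y}A_{xy}v_xv_y=\int_0^\infty\!\!\int_0^\infty e(X_s,X_t)\,ds\,dt\le\M(G)\left(\int_0^\infty\sqrt{|X_t|}\,dt\right)^2,$$
the inequality being the estimate $e(X_s,X_t)\le\M(G)\sqrt{|X_s||X_t|}$ for non-empty $X_s,X_t$. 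This reduces the theorem to bounding $I:=\int_0^\infty\sqrt{|X_t|}\,dt$.

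\textbf{The resulting optimization.} The quantity $I$ is to be bounded under the constraints $|X_t|\le|V|$; $X_t=\est$ for $t>\|v\|_\infty$ (with $\|v\|_\infty\le\|v\|_2=1$); and $\int_0^\infty 2t|X_t|\,dt=\|v\|_2^2=1$. Treating this as a calculus-of-variations problem and applying a Lagrange multiplier suggests that the extremal profile has $|X_t|=|V|$ on an initial interval $[0,t_0]$ and $|X_t|=|V|t_0^2/t^2$ on $[t_0,T]$, with $t_0$ and $T\le1$ saturating the normalization. Evaluating $I$ on this extremal profile and invoking the bound $T\le 1$ should yield $I^2\le\frac14\log|V|+1$, giving the theorem. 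The main hurdle will be performing this optimization sharply enough to extract the coefficient $1/4$ on $\log|V|$: care is needed to handle both the flat initial segment and the $1/t^2$ tail, and to ensure that no parasitic $\log\log|V|$ correction spoils the leading constant.
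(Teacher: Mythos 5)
Your lower bound is fine and coincides with the paper's. Your layer-cake framework for the upper bound is a genuinely different and elegant route: the paper instead passes to the bipartite double cover, proves a bipartite version (Theorem \refp{main-general-bip}), and relies on Lemma \refl{ucube1}, which asserts that some Boolean vector $\del$ has $\<z,\del\>\ge\frac{2}{\sqrt{\log n+4}}\|z\|\|\del\|$. Your approach avoids the reduction and attacks $v^tAv$ directly; this is a clean and correct set-up, and the inequality $v^tAv\le \M(G)\bigl(\int_0^\infty\sqrt{|X_t|}\,dt\bigr)^2$ is right.

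However, there is a genuine gap in how you propose to finish. The continuous relaxation you describe does not give the coefficient $\frac14\log|V|+1$. Carrying out the Lagrange optimization with $g(t)=\min\{n,1/(16\lambda^2t^2)\}$ on $[0,1]$, the normalization $\int_0^1 2tg\,dt=1$ forces $t_0=e^{-u}$ with $e^{2u}=n(1+2u)$, and then $I^2=(1+u)^2/(1+2u)$. Because $u=\frac12\log n+\frac12\log\log n+O(1)$, this yields $I^2=\frac14\log n+\frac14\log\log n+O(1)$ --- exactly the parasitic $\log\log|V|$ you worried about, and it does survive. The over-relaxation is the culprit: the continuous $g$ tapers below $1$ near $t=1$, which a genuine $|X_t|$ cannot do.

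The fix is to exploit the discreteness rather than relax it. Since $|X_t|=k$ on $[v_{(k+1)},v_{(k)})$, where $v_{(1)}\ge\dots\ge v_{(n)}\ge v_{(n+1)}:=0$ is the decreasing rearrangement of $v$, one has by Abel summation
$$ I=\int_0^\infty\sqrt{|X_t|}\,dt=\sum_{k=1}^n\sqrt k\,(v_{(k)}-v_{(k+1)})=\sum_{k=1}^n v_{(k)}\bigl(\sqrt k-\sqrt{k-1}\bigr), $$
and Cauchy--Schwarz with $\sum v_{(k)}^2=1$ gives $I^2\le\sum_{k=1}^n(\sqrt k-\sqrt{k-1})^2\le\frac14\log n+1$. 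This last sum estimate is precisely the ``routine estimate'' the paper invokes inside the proof of Lemma \refl{ucube1}; in fact, once you write $I$ in this form you will notice your argument and the paper's share the same computational heart (Abel summation followed by Cauchy--Schwarz against $(\sqrt k-\sqrt{k-1})_k$), wrapped in a different outer structure. With this replacement your proof is complete and arguably more direct than the paper's.
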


By the remark above, and since $\lmax(G)=r$ for an $r$-regular graph $G$, the
lower bound of Theorem \reft{main-general} is attained if $G$ is regular. As
to the upper bound, we have no reasons to believe that it is sharp. However,
in Section \refs{example} we construct a sequence of graphs $G$ of
arbitrarily large order $n$ such that
  $$ \M(G)
        \ll \left(\frac{\log\log n}{\log n}\right)^{1/8} \,\lam_{\max}(G) $$
(with an absolute implicit constant); this shows that the upper bound of
Theorem \reft{main-general} cannot be improved all the way down to the lower
bound. Our construction uses a version of the tensor power trick and its
analysis is rather complicated technically; finding a simpler construction
would be interesting.

We notice that the close relation between the quantity $\M(G)$ and the
largest eigenvalue $\lmax(G)$ stems form the fact that if $A$ denotes the
adjacency matrix of $G$, and $n$ is the order of $G$, then
\begin{equation}\label{e:M-explained}
  \M(G) = \max_{0\ne\xi,\eta\in\{0,1\}^n} \frac{\xi^t A\eta}{\|\xi\|\|\eta\|}
\end{equation}
(as it follows by associating to every subset of the vertex set of $G$ its
characteristic vector), whereas
% Reference?
  $$ \lmax(G) = \sup_{0\ne x,y\in\R^n} \frac{x^tAy}{\|x\|\|y\|}. $$
This observation immediately yields the estimate
  $$ \M(G)\le\lmax(G), $$
which was included into the statement of Theorem \reft{main-general} (and
will be included also into subsequent theorems) just for completeness.

Our next result presents an improvement over Theorem \reft{main-general} for
sparse graphs.
\begin{theorem}\label{t:main-general-sparse}
If $G$ is a graph with the vertex set $V$, then
  $$ \textstyle
         \M(G) \le \lmax(G)
               \le \sqrt{\big(\log\Del(G)+1\big)
                                     \big(\frac14\log|V|+1\big)}\,\M(G). $$
\end{theorem}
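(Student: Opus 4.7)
Since the lower bound $\M(G)\le\lmax(G)$ is the same as in Theorem \reft{main-general}, I focus on the upper bound. The plan is to refine the dyadic level-set decomposition of the Perron eigenvector that underlies the proof of Theorem \reft{main-general} by additionally exploiting the degree bound $\Delta:=\Del(G)$.

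Let $x\in\R^V$ be a non-negative unit Perron eigenvector of the adjacency matrix $A$ of $G$, and set $X_j:=\{v : 2^{-j-1} < x_v \le 2^{-j}\}$, with $x^{(j)}$ denoting the restriction of $x$ to $X_j$. The coordinates with $x_v<|V|^{-1/2}$ contribute negligibly to the quadratic form $x^tAx=\lmax(G)$, so in the expansion
\begin{equation*}
 \lmax(G) \;=\; \sum_{j,k}(x^{(j)})^t A\, x^{(k)}
             \;\le\; \sum_{j,k} 2^{-j-k}\,e(X_j,X_k)
\end{equation*}
only $J\sim\tfrac12\log_2|V|$ indices on each side are active, exactly as in Theorem \reft{main-general}.

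For each pair $(j,k)$ I would then use two complementary bounds on $e(X_j,X_k)$. The first, $e(X_j,X_k)\le\M(G)\sqrt{|X_j||X_k|}$, is the defining property of $\M$ and is the only ingredient used in the proof of Theorem \reft{main-general}. The second is extracted from the eigenvector equation $\lmax(G)\,x_v=\sum_{u\sim v}x_u$: for $v\in X_j$, neighbors in $X_k$ contribute at least $2^{-k-1}|N(v)\cap X_k|$ to the right-hand side, so that $|N(v)\cap X_k|\le\min\{\Delta,\,2\lmax(G)\,2^{k-j}\}$, and hence $e(X_j,X_k)\le|X_j|\min\{\Delta,\,2\lmax(G)\,2^{k-j}\}$ (together with its symmetric counterpart).

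The crux is then to apply the Cauchy--Schwarz inequality to the double sum in such a way that it factorises as $\sqrt{S_1\cdot S_2}$, where $S_1$ is driven by the $\M$-bound and reproduces the factor $\tfrac14\log|V|+1$ of Theorem \reft{main-general}, while $S_2$ is driven by the degree bound and contributes a factor $\log\Delta+1$. The latter arises because the range of exponents $|k-j|$ on which the degree bound is the binding one, namely where $2^{|k-j|}\lesssim\Delta/\lmax(G)$, has width $O(\log\Delta)$. The main obstacle I anticipate is the bookkeeping: one must split the pairs $(j,k)$ into the two regimes, apply the appropriate bound in each, and then eliminate the stray factor of $\lmax(G)$ introduced by the eigenvector estimate through a self-bounding argument---rearranging to isolate $\lmax(G)^2$ and taking square roots to recover the geometric-mean form in the theorem statement.
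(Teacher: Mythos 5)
Your proposal is genuinely different in architecture from the paper's argument: the paper passes to the bipartite double cover, finds a $(0,1)$-vector $\eta$ well-aligned with the top singular vector (Lemma~\refl{ucube1}), and then crucially exploits that the intermediate vector $B\eta$ is \emph{integer-valued} with entries in $[0,\Del]$; the factor $\log\Del+1$ is produced by Lemma~\refl{ucube2}, whose proof is a level-set argument at integer thresholds that culminates in the harmonic sum $\sum_{i=1}^\Del 1/i$. You instead work directly with the Perron eigenvector and its dyadic level sets, and try to extract $\log\Del$ from the width of the regime where the eigenvector estimate $|N(v)\cap X_k|\lesssim\lmax 2^{k-j}$ beats the raw bound $\Del$. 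These are not the same mechanism, and the difference matters.

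There are several concrete gaps. First, the claim that coordinates with $x_v<|V|^{-1/2}$ contribute negligibly to $x^tAx$ is false as stated: each vertex contributes exactly $\lmax x_v^2$ to the quadratic form, so the total from all small coordinates can be as large as $\lmax\cdot|V|\cdot|V|^{-1}=\lmax$, i.e.\ everything. (This is fixable by lowering the threshold by a constant, but as written it is wrong.) Second, and more seriously, the ``crux'' Cauchy--Schwarz factorization into $\sqrt{S_1S_2}$ is left entirely unexecuted, and the most natural implementation does not give the claimed result. Writing $e(X_j,X_k)\le\sqrt{\M\sqrt{|X_j||X_k|}}\cdot\sqrt{|X_j|\min\{\Del,2\lmax 2^{k-j}\}}$ and applying Cauchy--Schwarz yields
$$\lmax\lesssim\sqrt{(\log|V|)\,\M}\cdot\sqrt{\lmax\log\Del},$$
and your proposed self-bounding step (``isolate $\lmax^2$ and take square roots'') turns this into $\lmax\lesssim(\log|V|)(\log\Del)\,\M$, which is weaker than Theorem~\reft{main-general}, not stronger. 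The stray $\lmax$ enters linearly under the outer square root, so eliminating it by self-bounding does \emph{not} halve the exponents; it leaves the product $(\log|V|)(\log\Del)$ intact rather than producing its square root. Third, even in the regime split, the contribution of the pairs $(j,k)$ where the raw degree bound is binding is only $O(\lmax)$, i.e.\ of the same order as the quantity being bounded, so it cannot simply be discarded. The missing idea is precisely the one the paper supplies: first replace $x$ by a $(0,1)$-approximant $\eta$ so that the relevant vector becomes the integer degree-count vector $(d_Y(u))_u$, and then exploit integrality as in Lemma~\refl{ucube2}. Without that step I do not see how your decomposition reaches the geometric mean $\sqrt{(\log\Del+1)(\tfrac14\log|V|+1)}$.
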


To present a yet more robust estimate, we introduce the following notation.
Given a finite sequence $d$ with non-negative terms, consider the
non-increasing rearrangement $d_1\longge d_n\ge 0$ of the terms of $d$,
define $k\in[1,n]$ to be the smallest positive integer with
  $$ d_1^2\longp d_k^2 \ge d_{k+1}^2\longp d_n^2, $$
and let $\rho(d):=d_1/d_k$ if $d_k\ne 0$, and $\rho(d):=1$ if $d_k=0$ (in
which case $k=1$ and $d$ is the zero sequence) The quantity $\rho(d)$
measures how smooth is $d$. We record the following simple bounds:
\begin{itemize}
\item[i)]  $\rho(d)\ge 1$. (Equality is attained, for instance, if all
    positive coordinates of $d$ are equal to each other.)
\item[ii)] $\rho(d)\le\sqrt n$: for $k=1$ this is trivial, and for $k>1$
    follows from
      $$ d_1^2 \le d_1^2 \longp d_{k-1}^2
                                   < d_k^2 \longp d_n^2 < nd_k^2. $$ (On
    the other hand, if $d_1=\sqrt n$ and $d_2\longe d_n=1+1/n$, then
    $\rho(d)=(1-o(1))\sqrt n$ as $n\to\infty$.)
\item[iii)] $\rho(d)<\sqrt2\ \|d\|_\infty/\|d\|_2$ (with $\|\cdot\|_p$
    denoting the $\ell^p$-norm): to see this, notice that
    $\|d\|_\infty=d_1$ and
      $$ nd_k^2 \ge d_k^2\longp d_n^2 > \frac n2\,\|d\|_2^2. $$
\end{itemize}

\begin{theorem}\label{t:master}
Let $G$ be a graph with the vertex set $V$. For each $v\in V$ and $X\seq V$,
denote by $d_X(v)$ the number of neighbors of $v$ in $X$, and let
 $K:=\max_{\est\ne X\seq V} \rho((d_X(v))_{v\in V})$. Then
  $$ \textstyle
     \M(G) \le \lmax(G) \le \sqrt{2(\log K+1)(\log|V|+4)}\, \M(G). $$
\end{theorem}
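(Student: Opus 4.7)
The lower bound $\M(G)\le\lmax(G)$ is immediate from \refe{M-explained} and the singular-value characterization of $\lmax(G)$: both quantities are suprema of $x^tAy/(\|x\|\|y\|)$, the former restricted to $\{0,1\}$-vectors and the latter taken over all nonzero $x,y\in\R^V$. For the upper bound I would use a two-level dyadic decomposition. Fix a nearly-extremal nonnegative unit vector $x$; truncating the coordinates of $x$ below a threshold of order $|V|^{-1/2}$ costs only negligible error and restricts the dyadic levels $X_i:=\{v:2^{-i-1}<x_v\le 2^{-i}\}$ to $i=0,1,\dotsc,L$ with $L\le\tfrac12\log|V|+O(1)$.

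The crux is the auxiliary estimate
  $$ \|d_X\|_2^2 \le C_0\,(\log K+1)\,\M(G)^2\,|X|,
     \qquad \est\ne X\seq V, $$
where $d_X:=(d_X(v))_{v\in V}$ and $C_0$ is an absolute constant. To prove it I would dyadically decompose $d_X$: setting $D:=\max_v d_X(v)$ and $Z_l:=\{v:D\cdot 2^{-l-1}<d_X(v)\le D\cdot 2^{-l}\}$, the pair of inequalities $|Z_l|\,D\,2^{-l-1}\le e(Z_l,X)\le\M(G)\sqrt{|Z_l||X|}$ yields the per-level bound $|Z_l|\,D^2\,4^{-l}\le 4\,\M(G)^2|X|$. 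The smoothness hypothesis $\rho(d_X)\le K$ then forces the top-$k$ entries of $d_X$ (which account for at least half of $\|d_X\|_2^2$) to lie in the first $\lfloor\log K\rfloor+1$ dyadic levels, so summing the per-level bound over those levels finishes the proof of the lemma.

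With the lemma in hand, combining the entrywise inequality $x\le\sum_i 2^{-i}\mathbf 1_{X_i}$ with Cauchy--Schwarz and $\|y\|=1$ gives
  $$ x^tAy \le \sum_i 2^{-i}\mathbf 1_{X_i}^tAy
           = \sum_i 2^{-i}\langle y,d_{X_i}\rangle
           \le \sum_i 2^{-i}\|d_{X_i}\|_2. $$
A further application of Cauchy--Schwarz, splitting the summand as $(2^{-i}\sqrt{|X_i|})\cdot(\|d_{X_i}\|_2/\sqrt{|X_i|})$ and using the easy bound $\sum_i 4^{-i}|X_i|=O(\|x\|^2)$ together with the auxiliary lemma applied term-by-term, produces $(x^tAy)^2\le C'\,(\log K+1)(\log|V|+O(1))\,\M(G)^2$. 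The main obstacle is sharpening the numerical constants so that the inequality reads exactly $\lmax(G)\le\sqrt{2(\log K+1)(\log|V|+4)}\,\M(G)$: a straightforward execution of the plan loses roughly a factor $2\sqrt 2$, and closing the gap will require either a more delicate choice of dyadic bases (not necessarily $2$) or a tighter analysis that avoids the ``doubling'' step $\|d_X\|_2^2\le 2\,(\text{top-}k\text{ squares})$ in the auxiliary lemma.
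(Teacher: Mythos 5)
Your approach is sound and reaches the stated bound up to absolute constants, but it is organized differently from the paper's. The paper first passes to the bipartite double cover (via Corollary~\refc{M=M} and the identity $\lmax(G)=\lmax(G\times K_2)$) and then proves Theorem~\refp{master-bip}, whose entire technical content lies in two ``best $\{0,1\}$-approximant'' lemmas: Lemma~\refl{ucube1} (any nonnegative $z\in\R^n$ admits a nonzero $\del\in\{0,1\}^n$ with $\<z,\del\>\ge\frac2{\sqrt{\log n+4}}\|z\|\|\del\|$, proved by an Abel-summation/telescoping trick on the sorted coordinates, with the test vectors being the prefix indicators $\mathbf 1_{\{1,\dots,k\}}$), and Lemma~\refl{ucube3} (denominator $\sqrt{8(\log\rho(z)+1)}$, obtained by flooring $z/z_k$ and then running the integer-level argument of Lemma~\refl{ucube2}). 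These are applied in sequence to $B^tx$ and to $B\eta$, producing explicit characteristic vectors $\eta,\xi$ that witness a large $e(X,Y)/\sqrt{|X||Y|}$.

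Your scheme supplies the same two inputs but via coarser dyadic decompositions and never passes to the double cover. Your auxiliary estimate $\|d_X\|_2^2\lesssim(\log K+1)\M(G)^2|X|$ is exactly what falls out of Lemma~\refl{ucube3} once you take $z=d_X$ and note $\<d_X,\mathbf 1_Y\>=e(X,Y)\le\M(G)\sqrt{|X||Y|}$; your dyadic proof of it (levels $Z_l$, per-level bound, then invoke $\rho(d_X)\le K$ to confine the heavy half of $\|d_X\|_2^2$ to $O(\log K)$ levels) is correct but inherently loses factors relative to the paper's integer-level telescoping in Lemma~\refl{ucube2}. Likewise, your truncate-and-dyadically-decompose treatment of the Perron vector $x$ plays the role of Lemma~\refl{ucube1}, again with a constant loss compared to the paper's Abel-summation proof. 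The final Cauchy--Schwarz across the levels $X_i$ neatly replaces the paper's step of explicitly extracting a pair $(\xi,\eta)$. In short: you get a more elementary, self-contained argument that avoids the double cover, at the cost of the constants, exactly as you already identified; recovering the stated $\sqrt{2(\log K+1)(\log|V|+4)}$ on the nose does require the sharper forms of Lemmas~\refl{ucube1}--\refl{ucube3}.
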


Clearly, for any sequence $d$ with integer terms we have
$\rho(d)\le\|d\|_\infty$. Consequently, Theorem \reft{master} readily implies
Theorem \reft{main-general-sparse}, albeit with slightly weaker constants.

\section{Background and summary of results: bipartite graphs}

Theorems \reft{main-general}--\reft{master} can be refined in the situation
where the graph $G$ under consideration is bipartite. Indeed, the very
definition of the quantity $\M(G)$ can be given a cleaner shape in this case.
\begin{claim}\label{c:bipartiteM}
If $G$ is a bipartite graph with the partite sets $U$ and $W$, then
  $$ \M(G) = \max_{\sub{\est\ne X\seq U \\ \est\ne Y\seq W}}
                                        \frac{e(X,Y)}{\sqrt{|X||Y|}}. $$
\end{claim}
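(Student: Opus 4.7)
The ``$\ge$'' direction is immediate, since on the right-hand side the supremum defining $\M(G)$ is merely restricted to a subfamily of the admissible pairs. All the content therefore lies in the reverse inequality, for which my plan is to show that every admissible pair $X,Y\seq V$ is dominated --- in the sense of the ratio $e(X,Y)/\sqrt{|X||Y|}$ --- by some pair of the restricted form $(X',Y')$ with $X'\seq U$ and $Y'\seq W$.

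Given such $X$ and $Y$, I would split them along the bipartition by setting $X_U:=X\cap U$, $X_W:=X\cap W$, $Y_U:=Y\cap U$, and $Y_W:=Y\cap W$. Since every edge of $G$ runs between $U$ and $W$, one has $e(X,Y)=e(X_U,Y_W)+e(X_W,Y_U)$, while the denominator factors as $\sqrt{|X||Y|}=\sqrt{(|X_U|+|X_W|)(|Y_U|+|Y_W|)}$. Abbreviating $a:=e(X_U,Y_W)$, $b:=e(X_W,Y_U)$, and $\alpha:=|X_U|$, $\beta:=|X_W|$, $\gamma:=|Y_U|$, $\delta:=|Y_W|$, the claim reduces to the arithmetic assertion
\[
  \frac{a+b}{\sqrt{(\alpha+\beta)(\gamma+\delta)}}
  \le \max\!\left\{\frac{a}{\sqrt{\alpha\delta}},\,\frac{b}{\sqrt{\beta\gamma}}\right\},
\]
with the convention that a term with vanishing denominator is dropped (in which case its numerator vanishes too, by bipartiteness).

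The one substantive ingredient is the elementary Cauchy--Schwarz type estimate $\sqrt{\alpha\delta}+\sqrt{\beta\gamma}\le\sqrt{(\alpha+\beta)(\gamma+\delta)}$, which on squaring both sides reduces to $(\sqrt{\alpha\gamma}-\sqrt{\beta\delta})^{2}\ge 0$. Letting $M$ denote the right-hand side of the display, this gives $a\le M\sqrt{\alpha\delta}$ and $b\le M\sqrt{\beta\gamma}$, whence $a+b\le M(\sqrt{\alpha\delta}+\sqrt{\beta\gamma})\le M\sqrt{(\alpha+\beta)(\gamma+\delta)}$, exactly as required. The degenerate cases --- one of $X_U,X_W,Y_U,Y_W$ being empty --- collapse directly: if, for instance, $X_U=\est$, then $a=0$ and $e(X,Y)/\sqrt{|X||Y|}=b/\sqrt{|X_W|\,|Y|}\le b/\sqrt{|X_W||Y_U|}$, the ratio for the admissible pair $(X_W,Y_U)$ (and both sides vanish if $Y_U$ is also empty). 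I do not foresee any real obstacle here; recognising the right Cauchy--Schwarz style inequality is essentially the whole argument.
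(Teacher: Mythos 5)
Your proof is correct and essentially identical to the paper's: both split $X$ and $Y$ along the bipartition, use bipartiteness to write $e(X,Y)=e(X_U,Y_W)+e(X_W,Y_U)$, bound each summand by the maximum ratio $M$ times the corresponding $\sqrt{|X_U||Y_W|}$ or $\sqrt{|X_W||Y_U|}$, and finish with the Cauchy--Schwarz type inequality $\sqrt{\alpha\delta}+\sqrt{\beta\gamma}\le\sqrt{(\alpha+\beta)(\gamma+\delta)}$. Your treatment of the degenerate cases (some of $X_U,X_W,Y_U,Y_W$ empty) is a bit more explicit than the paper's, but the substance is the same.
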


\begin{proof}
It suffices to show that for any $\est\ne X_U,Y_U\seq U$ and
 $\est\ne X_W,Y_W\seq W$ we have
  $$ \frac{e(X_U\cup X_W,Y_U\cup Y_W)}{\sqrt{(|X_U|+|X_W|)(|Y_U|+|Y_W|)}}
        \le \max \left\{ \frac{e(X_U,Y_W)}{\sqrt{|X_U||Y_W|}},
                         \frac{e(X_W,Y_U)}{\sqrt{|X_W||Y_U|}} \right\}. $$
To this end we denote by $T$ the maximum in the right-hand side, so that
$e(X_U,Y_W)\le T\sqrt{|X_U||Y_W|}$ and
 $e(X_W,Y_U)\le T\sqrt{|X_W||Y_U|}$, and observe that then
\begin{align*}
  e(X_U\cup X_W,Y_U\cup Y_W)
    &=   e(X_U,Y_W)+e(X_W,Y_U) \\
    &\le T\big( \sqrt{|X_U||Y_W|} + \sqrt{|X_W||Y_U|} \big) \\
    &\le T \sqrt{(|X_U|+|X_W|)(|Y_U|+|Y_W|)}.
\end{align*}
\end{proof}

The following corollary will be used in conjunction with the fact that
% Reference?
the largest eigenvalue of the bipartite double cover of a graph is equal to
the largest eigenvalue of the graph itself.
\begin{corollary}\label{c:M=M}
For any graph $G$ we have $\M(G\times K_2)=\M(G)$.
\end{corollary}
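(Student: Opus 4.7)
The plan is to apply Claim~\refc{bipartiteM} to the bipartite graph $G\times K_2$ and then show that the resulting maximum is, essentially term by term, the same as the one defining $\M(G)$; the corollary is then a direct unpacking of the definition of the bipartite double cover.

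Concretely, I would proceed as follows. Recall that $G\times K_2$ is bipartite with partite sets $V_0$ and $V_1$, each a disjoint copy of $V:=V(G)$, and with edge set $\{(u,0)(v,1):uv\in E(G)\}$. Since $G\times K_2$ is bipartite, Claim~\refc{bipartiteM} gives
\[
  \M(G\times K_2)=\max_{\sub{\est\ne X\seq V_0\\ \est\ne Y\seq V_1}} \frac{e(X,Y)}{\sqrt{|X||Y|}}.
\]
I would then parametrize such $X,Y$ as $X=X'\times\{0\}$ and $Y=Y'\times\{1\}$ with $\est\ne X',Y'\seq V$. By the definition of the double cover, the edges of $G\times K_2$ between $X$ and $Y$ are precisely the pairs $(u,0)(v,1)$ with $u\in X'$, $v\in Y'$, and $uv\in E(G)$; so $e(X,Y)$ in $G\times K_2$ equals the quantity $e(X',Y')$ from the definition of $\M(G)$ (which, recall, was itself introduced as the number of edges of the subgraph $G_{X',Y'}$ of $G\times K_2$ induced by copies of $X'$ and $Y'$ in opposite partite sets). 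Since also $\sqrt{|X||Y|}=\sqrt{|X'||Y'|}$, the maximum above becomes exactly $\M(G)$.

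There is no real obstacle: Claim~\refc{bipartiteM} absorbs all the combinatorial content (namely, that for bipartite graphs one may restrict attention to $X,Y$ in opposite partite sets), and what remains is a definition-chasing argument identifying subsets of $V_0$ and $V_1$ with subsets of $V$ and verifying the edge-counting correspondence. The only mildly delicate point is remembering that $e(X',Y')$ on the $\M(G)$-side is, by the paper's convention, the edge count in $G_{X',Y'}$ rather than a set-theoretic count of edges in $G$, but this is precisely what makes the two sides match on the nose.
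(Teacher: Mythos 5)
Your proof is correct and follows essentially the same route as the paper: the paper likewise identifies subsets of $V$ with subsets of the two partite sets of $G\times K_2$ via the adjacency-preserving bijections, observes that the ratios $e(X,Y)/\sqrt{|X||Y|}$ match term by term, and (implicitly) invokes Claim~\refc{bipartiteM} to restrict the maximum defining $\M(G\times K_2)$ to subsets lying in opposite partite sets. Your version is just a slightly more explicit unpacking of the same argument, including the correct observation about the edge-counting convention.
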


To prove the corollary denote by $\phi'$ and $\phi''$ the
adjacency-preserving bijections of the vertex set $V$ of $G$ onto the partite
sets of $G\times K_2$, and notice that then, for any $X,Y\seq V$,
  $$ \frac{e_G(X,Y)}{\sqrt{|X||Y|}}
       = \frac{e_{G\times K_2}(\phi'(X),\phi''(Y))}%
                                            {\sqrt{|\phi'(X)| |\phi''(Y)|}} $$
(where $e_G$ and $e_{G\times K_2}$ denote the number of edges in the
corresponding graphs).

The bipartite analogue of \refe{lmax-bounds} is given by
\begin{lemma}\label{l:lmax-bipartite}
If $G$ is a bipartite graph with the partite sets $U$ and $W$ then, denoting
by $d_U$ and $d_W$ the degree sequences of $U$ and $W$, respectively, and
letting $\Del_U:=\|d_U\|_\infty$ and $\Del_W:=\|d_W\|_\infty$, we have
  $$ \textstyle
     \max\left\{ \sqrt{\frac{|U|}{|W|}}\,\|d_U\|_2,
                    \sqrt{\frac{|W|}{|U|}}\,\|d_W\|_2 \right\}
                                     \le \lmax(G) \le \sqrt{\Del_U\Del_W}. $$
Consequently,
  $$ \sqrt{\|d_U\|_2\|d_W\|_2}\le\lmax(G)\le\sqrt{\Del_U\Del_W}, $$
and, therefore, if $G$ is $(r_U,r_W)$-regular, then $\lmax=\sqrt{r_Ur_W}$.
\end{lemma}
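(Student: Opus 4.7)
The plan is to exploit the bipartite block structure of the adjacency matrix in order to identify $\lmax(G)$ with the operator norm of the biadjacency matrix, and then apply the Rayleigh-quotient characterization with well-chosen test vectors. Writing $B$ for the $|U|\times|W|$ biadjacency matrix of $G$, the adjacency matrix takes the block form $A=\begin{pmatrix}0&B\\B^t&0\end{pmatrix}$, so that $\lmax(G)$ coincides with the largest singular value of $B$; equivalently,
$$ \lmax(G) = \max_{x\in\R^U,\,y\in\R^W,\,\|x\|=\|y\|=1} x^t B y, $$
with standard Euclidean norms.

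For the upper bound $\lmax(G)\le\sqrt{\Del_U\Del_W}$ I would write $x^t B y = \sum_{uw\in E(G)} x_u y_w$ and apply Cauchy--Schwarz,
$$ \Bigl(\sum_{uw\in E} x_u y_w\Bigr)^2 \le \Bigl(\sum_{u\in U} d(u)\,x_u^2\Bigr)\Bigl(\sum_{w\in W} d(w)\,y_w^2\Bigr), $$
and then bound the two factors by $\Del_U\|x\|^2$ and $\Del_W\|y\|^2$, respectively.

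For the lower bound I would test against $y=|W|^{-1/2}\mathbf{1}_W$: then $By$ has $u$th entry $d(u)/\sqrt{|W|}$, so
$$ \lmax(G)^2 \ge \|By\|^2 = \frac{1}{|W|}\sum_{u\in U} d(u)^2 = \frac{|U|}{|W|}\,\|d_U\|_2^2, $$
keeping in mind the paper's implicit normalization $\|d\|_2^2=n^{-1}\sum_i d_i^2$ (as already forced by \refe{lmax-bounds} on regular graphs). Swapping the roles of $U$ and $W$ yields the twin estimate with $\|d_W\|_2$, and taking the maximum of the two produces the left-hand side of the lemma. The ``consequently'' inequality then follows from the elementary $\max\{a,b\}\ge\sqrt{ab}$, and the $(r_U,r_W)$-regular case is immediate because $\|d_U\|_2=\Del_U=r_U$ and $\|d_W\|_2=\Del_W=r_W$, forcing both sides to equal $\sqrt{r_U r_W}$.

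No single step looks genuinely hard---the whole argument is routine Rayleigh-quotient and Cauchy--Schwarz bookkeeping. The one mild pitfall is keeping the $\sqrt{|U|/|W|}$ and $\sqrt{|W|/|U|}$ factors in the right places, which rests entirely on correctly tracking the $1/\sqrt{n}$ normalization built into the paper's convention for $\|d\|_2$.
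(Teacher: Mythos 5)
Your proof is correct. The lower bound is essentially the same argument as the paper's: both test the Rayleigh quotient against the normalized characteristic vector of a partite set (you phrase it with the $|U|\times|W|$ biadjacency block $B$ acting on $\mathbf{1}_W/\sqrt{|W|}$; the paper phrases it with the full adjacency matrix $A$ acting on the characteristic vector of $U$, which is the mirror case and is handled symmetrically). The ``consequently'' step and the regular case are handled the same way.

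Your upper bound, however, takes a genuinely different route. You bound $x^tBy=\sum_{uw\in E}x_uy_w$ directly via Cauchy--Schwarz over the edge set, obtaining
$\bigl(\sum_{uw\in E}x_uy_w\bigr)^2\le\bigl(\sum_u d(u)x_u^2\bigr)\bigl(\sum_w d(w)y_w^2\bigr)\le\Del_U\Del_W\|x\|^2\|y\|^2$,
which immediately gives $\|B\|\le\sqrt{\Del_U\Del_W}$ with no reference to eigenvectors. The paper instead takes a Perron eigenvector $(\xi_u,\eta_w)$ of the full adjacency matrix, writes out the two coordinatewise eigenvalue equations, passes to the sup-norms $\xi_{\max},\eta_{\max}$ to get $\lmax\xi_{\max}\le\Del_U\eta_{\max}$ and $\lmax\eta_{\max}\le\Del_W\xi_{\max}$, and multiplies; it then needs a short argument to rule out $\xi_{\max}\eta_{\max}=0$. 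Your Cauchy--Schwarz version is arguably cleaner in that it avoids that degeneracy check and never invokes the existence of an eigenvector; it also generalizes verbatim to non-negative rectangular matrices with bounded row and column sums. The paper's sup-norm argument is the classical one that also gives the inequality $\lmax\le\Del$ for general graphs with essentially no change. Both are standard and fully rigorous.
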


\begin{proof}
Let $A$ denote the adjacency matrix of $G$. If $\xi\in\R^{|U|+|W|}$ is the
characteristic vector of $U$, then the non-zero coordinates of the vector
$A\xi$ form the sequence $d_W$, and therefore $\|A\xi\|=\sqrt{|W|}\|d_W\|_2$.
Hence,
  $$ \lmax(G) = \sup_{0\ne x\in\R^{|U|+|W|}} \frac{\|Ax\|}{\|x\|}
          \ge \frac{\|A\xi\|}{\|\xi\|} = \sqrt{\frac{|W|}{|U|}}\,\|d_W\|_2, $$
and in an identical way we obtain the estimate
$\lmax(G)\ge\sqrt{|U|/|W|}\,\|d_U\|_2$.

For the upper bound, suppose that $(\xi_u,\eta_w)_{u\in U,\,w\in W}$ is an
eigenvector of $A$, correspon\-ding to the eigenvalue $\lmax(G)$; thus,
\begin{equation}\label{e:lmax-bip-upper}
  \sum_{w\sim u} \eta_w = \lmax(G)\, \xi_u
             \quad\text{and}\quad \sum_{u\sim w} \xi_u = \lmax(G)\, \eta_w,
\end{equation}
with the summation in the first sum extending over all vertices $w\in W$
adjacent to the given vertex $u\in U$, and the summation in the second sum
over all vertices $u\in U$ adjacent to the given vertex $w\in W$. Letting
  $$ \xi_{\max} := \max_{u\in U} |\xi_u|
                \quad\text{and}\quad \eta_{\max} := \max_{w\in W} |\eta_w|, $$
we conclude that
  $$ \lmax(G)\, \xi_{\max} \le \Del_U\, \eta_{\max}
       \quad\text{and}\quad \lmax(G)\, \eta_{\max} \le \Del_W\, \xi_{\max}, $$
and the result follows by multiplying out the two estimates and observing
that $\xi_{\max}\eta_{\max}\ne 0$. (If we had, say, $\xi_{\max}=0$, this
would imply $\xi_u=0$ for each $u\in U$ and, consequently, $\eta_w=0$ for
each $w\in W$ by \refe{lmax-bip-upper}.)
\end{proof}

The bipartite analogue of \refe{M-bounds} is as follows: if
$G,U,W,d_U,d_W,\Del_U$, and $\Del_W$ are as in Lemma~\refl{lmax-bipartite},
then, letting $\od_U:=\|d_U\|_1$ and $\od_W:=\|d_W\|_1$, we have
\begin{equation}\label{e:M-bounds-bipartite}
     \sqrt{\od_U\od_W} \le \M(G) \le \sqrt{\Del_U\Del_W}.
\end{equation}
For the proof it suffices to notice that, on the one hand,
  $$ \M(G) \ge \frac{e(U,W)}{\sqrt{|U||W|}} = \sqrt{\frac{|U|}{|W|}}\,\od_U
                                            = \sqrt{\frac{|W|}{|U|}}\,\od_W, $$
and, on the other hand, for any $X\seq U$ and $Y\seq W$,
  $$ e(X,Y) \le \min \{ |X|\Del_U, |Y|\Del_W \}
                                   \le \sqrt{|X||Y|} \, \sqrt{\Del_U\Del_W}. $$

Notice that, as a result of \refe{M-bounds-bipartite} and Lemma
\refl{lmax-bipartite}, for an $(r_U,r_W)$-regular graph $G$ we have
  $$ \M(G)=\lmax(G)=\sqrt{r_Ur_W}. $$

We now state the bipartite versions of Theorems
\reft{main-general}--\reft{master}.

\renewcommand{\theprimetheorem}{\reft{main-general}$'$}
\begin{primetheorem}\label{p:main-general-bip}
If $G$ is a bipartite graph with the partite sets $U$ and $W$, then
  $$ \textstyle
     \M(G) \le \lmax(G) \le
         \sqrt{\big(\frac14\log|U|+1\big)\big(\frac14\log|W|+1\big)}
                                                                \,\M(G). $$
\end{primetheorem}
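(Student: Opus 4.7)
The plan is to characterize $\lmax(G)$ via the largest singular value of the bipartite adjacency matrix and then apply a continuous layer-cake decomposition to its principal singular vectors, in the spirit of \refe{M-explained}.

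Since $G$ is bipartite with partite sets $U$ and $W$, its adjacency matrix is block-off-diagonal with blocks $B\in\R^{U\times W}$ and $B^t$, so $\lmax(G)$ equals the largest singular value of $B$:
\[
  \lmax(G) = \sup_{x\in\R^U,\, y\in\R^W,\, x,y\ne 0} \frac{x^t B y}{\|x\|\,\|y\|}.
\]
Combined with Claim~\refc{bipartiteM}, this immediately yields $\M(G)\le\lmax(G)$ upon restricting to $\{0,1\}$-valued $x,y$. For the upper bound, by Perron--Frobenius I may take optimizing $x,y\ge 0$. Writing $S(t):=\{u\in U:x_u\ge t\}$ and $T(s):=\{w\in W:y_w\ge s\}$, the layer-cake identities $x=\int_0^\infty\mathbf{1}_{S(t)}\,dt$ and $y=\int_0^\infty\mathbf{1}_{T(s)}\,ds$, together with the defining bound $e(S(t),T(s))\le\M(G)\sqrt{|S(t)|\,|T(s)|}$ furnished by Claim~\refc{bipartiteM}, give
\[
  x^t B y = \int_0^\infty\!\!\int_0^\infty e(S(t),T(s))\,dt\,ds
    \le \M(G)\cdot I_x\cdot I_y,
\]
where $I_x:=\int_0^\infty\sqrt{|S(t)|}\,dt$ and $I_y$ is defined analogously.

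The heart of the argument is the analytic inequality
\[
  I_x \le \sqrt{\tfrac14\log|U|+1}\;\|x\|_2
\]
for every non-negative $x\in\R^U$ (and symmetrically for $y$); substituting this together with $x^tBy=\lmax(G)\,\|x\|\,\|y\|$ and cancelling then delivers the theorem. To prove the inequality I would sort $x_{(1)}\longge x_{(n)}\ge 0$ with $n:=|U|$ and $x_{(n+1)}:=0$; since $|S(t)|=i$ on $(x_{(i+1)},x_{(i)}]$, summation by parts gives $I_x=\sum_{i=1}^n(\sqrt i-\sqrt{i-1})\,x_{(i)}$, so Cauchy--Schwarz yields $I_x\le\|x\|_2\sqrt{\sum_{i=1}^n(\sqrt i-\sqrt{i-1})^2}$. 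To bound the inner sum I would write $\sqrt i-\sqrt{i-1}=\int_{i-1}^i dt/(2\sqrt t)$ and apply Cauchy--Schwarz again on each unit interval (valid for $i\ge 2$): $(\sqrt i-\sqrt{i-1})^2\le\tfrac14\int_{i-1}^i dt/t=\tfrac14\log(i/(i-1))$. The $i=1$ term equals $1$, so the sum telescopes to at most $1+\tfrac14\log n$, exactly as required.

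The main obstacle is pinning down the precise constant $\tfrac14$. A naive discrete dyadic decomposition of $x$ and $y$ into level sets of ratio $2$ would recover the correct logarithmic shape but lose a multiplicative factor (typically of order $4$) coming from the per-level overhead between consecutive powers of $2$; the continuous layer-cake formulation paired with the nested Cauchy--Schwarz estimate for $(\sqrt i-\sqrt{i-1})^2$ is what delivers the constant $\tfrac14$ cleanly and keeps the $+1$ additive term, so that the product of the two factors $\tfrac14\log|U|+1$ and $\tfrac14\log|W|+1$ matches the statement.
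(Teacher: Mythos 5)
Your proof is correct, and while it rests on the same analytic kernel as the paper's, the structural route is genuinely different. The paper first establishes an existential statement (Lemma~\refl{ucube1}): for any nonnegative $z\in\R^n$, some nonzero $\del\in\{0,1\}^n$ satisfies $\<z,\del\>\ge\frac{2}{\sqrt{\log n+4}}\|z\|\|\del\|$; its proof goes by contradiction, assuming every prefix ratio $\frac{z_1+\cdots+z_k}{\sqrt k}$ is small, multiplying by $z_k-z_{k+1}$, summing (Abel summation), and applying Cauchy--Schwarz. The theorem then follows by invoking that lemma twice in sequence: first on $B^tx$ to produce $\eta$, then, after the pivot $\<B^tx,\eta\>=\<x,B\eta\>\le\|x\|\|B\eta\|$, on $B\eta$ to produce $\xi$. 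Your layer-cake argument replaces this with a direct, symmetric computation: decompose both singular vectors simultaneously into level-set indicators, apply the defining bound $e(S(t),T(s))\le\M(G)\sqrt{|S(t)||T(s)|}$ inside a double integral, and bound $I_x=\sum_i(\sqrt i-\sqrt{i-1})x_{(i)}$ by one Cauchy--Schwarz. There is no contradiction step and no sequential two-stage application. The decisive quantity $\sum_{i\le n}(\sqrt i-\sqrt{i-1})^2\le\frac14\log n+1$ is exactly what the paper's proof also needs (it labels the estimate routine and omits it); your per-interval Cauchy--Schwarz $(\sqrt i-\sqrt{i-1})^2\le\frac14\log\frac{i}{i-1}$ is a clean way to supply it, telescoping to the claimed bound with the right constant and additive term. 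Your route is arguably slicker and more self-contained for this particular theorem; the paper's existential lemma, by contrast, is modular and is swapped out for the variants (Lemmas~\refl{ucube2}, \refl{ucube3}) needed to prove Theorems~\refp{main-general-sparse-bip} and \refp{master-bip}, which a pure layer-cake formulation would not cover without further work.
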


\renewcommand{\theprimetheorem}{\reft{main-general-sparse}$'$}
\begin{primetheorem}\label{p:main-general-sparse-bip}
If $G$ is a bipartite graph with the partite sets $U$ and $W$, then, denoting
by $\Del_U$ the maximum degree of a vertex from $U$, we have
   $$ \textstyle
      \M(G) \le \lmax(G) \le
        \sqrt{\big(\log\Del_U+1\big)\big(\frac14\log|W|+1\big)}\, \M(G). $$
\end{primetheorem}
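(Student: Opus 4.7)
The lower bound $\M(G)\le\lmax(G)$ is immediate from \refe{M-explained}, so I focus on the upper bound. The plan is to mimic the deduction of Theorem \reft{main-general-sparse} from Theorem \reft{master}: first prove a bipartite master bound of the form
$$ \lmax(G)\le C\,\M(G)\sqrt{(\log K_U+1)(\log|W|+O(1))}, $$
with $K_U:=\max_{\est\ne Y\seq W}\rho((d_Y(u))_{u\in U})$ a smoothness parameter on the $U$-side, and then apply the integrality observation $\rho(d)\le\|d\|_\infty$ to the integer sequences $(d_Y(u))_u$, which yields $K_U\le\Del_U$ and hence the stated bound.

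For the master bound I would exploit that, $G$ being bipartite, $\lmax(G)$ is the largest singular value of the biadjacency matrix $B\in\{0,1\}^{U\times W}$: Perron--Frobenius supplies non-negative singular vectors $\xi\in\R^U$, $\eta\in\R^W$ satisfying $B\eta=\lmax(G)\,\xi$, $B^t\xi=\lmax(G)\,\eta$, and $\xi^tB\eta=\lmax(G)\,\|\xi\|_2\,\|\eta\|_2$. After rescaling so that $\|\xi\|_\infty=\|\eta\|_\infty=1$ (which leaves the ratio $\xi^tB\eta/(\|\xi\|_2\|\eta\|_2)=\lmax(G)$ unchanged) and dyadically decomposing
$$ X_k=\{u\in U:2^{-k}<\xi_u\le 2^{-k+1}\},\qquad
                                 Y_l=\{w\in W:2^{-l}<\eta_w\le 2^{-l+1}\}, $$
I would expand $\xi^tB\eta\le 4\sum_{k,l}2^{-k-l}e(X_k,Y_l)$, bound each $e(X_k,Y_l)\le\M(G)\sqrt{|X_k||Y_l|}$ by Claim \refc{bipartiteM}, and apply Cauchy--Schwarz on each of the indices $k,l$ to obtain $\lmax(G)\le C\,\M(G)\sqrt{L_U L_W}$, where $L_U,L_W$ count significant dyadic scales. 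For the $W$-side, the standard truncation of coordinates $\eta_w\lesssim 1/\sqrt{|W|}$ gives $L_W\le\frac12\log|W|+O(1)$, yielding the $\sqrt{\frac14\log|W|+1}$ factor of the claim.

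The main technical obstacle is squeezing a $\log K_U$-type factor out of the $U$-side in place of the naive $\frac12\log|U|$. For this I would use the eigenvalue equation $\xi_u=\lmax(G)^{-1}\sum_{w\sim u}\eta_w$, which after the dyadic splitting of $\eta$ expresses $\xi_u$ as a positive combination of the counts $d_{Y_l}(u)$ across $l$. The smoothness parameter $\rho((d_{Y_l}(u))_u)\le K_U$ then forces the $\xi$-coordinates below the ``top'' $\ell^2$-scale of each sequence $(d_{Y_l}(u))_u$ to contribute only a controlled fraction to the sum defining $\xi^tB\eta$; these can be absorbed into an error term, leaving only $O(\log K_U+1)$ effective $\xi$-scales. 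Executing this absorption quantitatively---in particular so as to preserve the sharp constant $1$ inside $\log\Del_U+1$---is the delicate step; once it is in place, the combination with $K_U\le\Del_U$ and the Cauchy--Schwarz recombination on both sides yields the claimed estimate.
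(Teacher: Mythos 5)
Your proposal diverges from the paper's argument in two important ways, and the divergence leaves a genuine gap where your argument does not actually close.

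First, the route. The paper does \emph{not} obtain Theorem \refp{main-general-sparse-bip} by first establishing the master bound of Theorem \refp{master-bip} and then specializing $\rho$ via $\rho(d)\le\|d\|_\infty$. The paper is explicit that this detour costs constants (``Theorem~\reft{master} readily implies Theorem~\reft{main-general-sparse}, albeit with slightly weaker constants''): Theorem~\refp{master-bip} gives $\sqrt{2(\log K+1)(\log|W|+4)}$, which after $K\le\Del_U$ is a factor $2\sqrt2$ off the stated $\sqrt{(\log\Del_U+1)(\frac14\log|W|+1)}$. Instead, the paper proves Theorem~\refp{main-general-sparse-bip} directly, in exactly the same way as Theorem~\refp{main-general-bip}, but replacing the \emph{second} application of Lemma~\refl{ucube1} by an application of Lemma~\refl{ucube2}. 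The decisive observation you miss is structural: once Lemma~\refl{ucube1} has produced the $(0,1)$-vector $\eta$ on the $W$-side, the vector $B\eta\in\R^{|U|}$ automatically has integer coordinates lying in $[0,\Del_U]$, so Lemma~\refl{ucube2} applies to it verbatim, giving the factor $1/\sqrt{\log\Del_U+1}$ with no loss. There is no need to touch the eigenvalue equation in the $U$-direction, no need to introduce the smoothness parameter $K_U$, and no absorption of ``subtop'' scales --- the step you flag as ``the delicate step'' is precisely what your outline never carries out, and what the paper avoids entirely by this integrality observation.

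Second, the decomposition. Your dyadic-level-set plus Cauchy--Schwarz scheme on both coordinates of the Perron--Frobenius singular pair would, even if pushed through, not reproduce the claimed constants. The pigeonholing $\xi_u\eta_w\le 4\cdot 2^{-k-l}$ already costs a multiplicative $4$, and the dyadic count of scales after the $1/\sqrt{|W|}$ truncation is $\approx\tfrac12\log_2|W|\approx 0.72\log|W|$, not $\tfrac14\log|W|$; combined, these overheads push the final constant well above the stated one. The paper sidesteps all of this by never dyadically binning: Lemma~\refl{ucube1} chooses the single optimal level set (sort the coordinates, take a prefix of length $k$ maximizing $(z_1+\dotsb+z_k)/\sqrt k$), and its proof via Abel summation against the weights $(\sqrt k-\sqrt{k-1})^2$ gives the tight $2/\sqrt{\log n+4}$ directly. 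In short: the correct proof is a two-lemma argument (\refl{ucube1} on $W$, then \refl{ucube2} on $B\eta$), not a master-bound specialization, and the constants in the statement are unreachable by the dyadic route you sketch.
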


Observing that in a bipartite graph the degree of a vertex from one partite
set does not exceed the size of another partite set, we get the following
corollary (to be compared with Theorem~\refp{main-general-bip}).
\begin{corollary}
If $G$ is a bipartite graph with the partite sets $U$ and $W$, then, letting
$n:=\min\{|U|,|W|\}$, we have
   $$ \textstyle
      \M(G) \le \lmax(G) \le \big(\frac12\log n+2\big)\,\M(G). $$
\end{corollary}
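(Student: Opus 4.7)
The plan is to apply Theorem~\refp{main-general-sparse-bip} and exploit the obvious bipartite inequality that every vertex of one partite set has all its neighbors in the other: namely $\Del_U\le|W|$ and symmetrically $\Del_W\le|U|$. Since the statement of Theorem~\refp{main-general-sparse-bip} is symmetric between the two partite sets, by relabelling if necessary I may assume $|W|\le|U|$, so that $n=|W|$ and the degree bound becomes $\Del_U\le n$.

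Substituting $\Del_U\le n$ and $|W|=n$ into the upper bound of Theorem~\refp{main-general-sparse-bip} yields
$$ \lmax(G) \le \sqrt{(\log n+1)\big(\tfrac14\log n+1\big)}\,\M(G). $$
It then remains only to verify that the right-hand coefficient is at most $\tfrac12\log n+2$. Writing $L:=\log n$, direct expansion gives
$$ (L+1)\big(\tfrac14 L+1\big) = \tfrac14 L^2+\tfrac54 L+1
   \quad\text{and}\quad \big(\tfrac12 L+2\big)^2 = \tfrac14 L^2+2L+4, $$
so the desired inequality, after squaring, reduces to $\tfrac54 L+1\le 2L+4$, which holds trivially for $L\ge 0$ (note $L\ge 0$ since $n\ge 1$, the degenerate case $n=0$ corresponding to an edgeless graph where both sides vanish).

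There is no substantive obstacle here: the whole argument amounts to plugging the bipartite inequality $\Del_U\le|W|$ into Theorem~\refp{main-general-sparse-bip} and carrying out the elementary comparison of squares above. If anything deserves caution it is merely the convention of which partite set plays the role of ``$W$'' in the invocation of the theorem, but the symmetry of its statement makes this costless.
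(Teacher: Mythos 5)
Your proof is correct and follows exactly the approach indicated in the paper: apply Theorem~\refp{main-general-sparse-bip} after noting that $\Del_U\le|W|$ (choosing the labeling so that the smaller partite set plays the role of $W$), then verify $\sqrt{(\log n+1)(\tfrac14\log n+1)}\le\tfrac12\log n+2$ by comparing squares. The paper states this only as a one-line observation preceding the corollary, and your computation supplies precisely the elementary verification it leaves implicit.
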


\renewcommand{\theprimetheorem}{\reft{master}$'$}
\begin{primetheorem}\label{p:master-bip}
Let $G$ be a bipartite graph with the partite sets $U$ and $W$. For each
$u\in U$ and $Y\seq W$, denote by $d_Y(u)$ the number of neighbors of $u$ in
$Y$, and let $K:=\max_{\est\ne Y\seq W} \rho((d_Y(u))_{u\in U})$. Then
  $$ \textstyle
     \M(G) \le \lmax(G) \le \sqrt{2(\log K+1)(\log|W|+4)}\, \M(G). $$
\end{primetheorem}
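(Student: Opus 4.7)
The plan is to parallel the proof of Theorem~\reft{master}, using the bipartite structure to split the dyadic decomposition asymmetrically between $U$ and $W$: roughly $\tfrac12\log|W|$ dyadic levels on the $W$-side and $O(\log K)$ effective levels on the $U$-side, so that a two-fold Cauchy--Schwarz produces the factor $\sqrt{(\log K+1)(\log|W|+O(1))}$. First I would fix Perron eigenvectors $x\ge 0$ on $U$ and $y\ge 0$ on $W$ with $\|x\|_2=\|y\|_2=1$, so that $\lmax(G)=x^tAy$ and $Ay=\lmax(G)\,x$. After rescaling $\|y\|_\infty=1$, I would truncate $y$ at a threshold of order $|W|^{-1/2}$ (losing only a bounded fraction of $\|y\|_2$, hence of $x^tAy$) and partition the support of the truncation into dyadic level sets $W_0,\dotsc,W_J$ with $W_j=\{w:y_w\in[2^{-j-1},2^{-j})\}$ and $J=\tfrac12\log|W|+O(1)$.

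The core of the argument is the single-level estimate
$$
\sum_{u\in U}x_u\,d_Y(u)\le C\sqrt{\log K+1}\,\M(G)\sqrt{|Y|}
$$
for every non-empty $Y\seq W$ and every non-negative unit vector $x$ on $U$. To prove it, let $T\seq U$ consist of the top-$k$ entries of $(d_Y(u))_{u\in U}$, where $k$ is the $\rho$-index from the definition of $K$; by hypothesis the values on $T$ lie in $[d_Y^{(1)}/K,d_Y^{(1)}]$ and $T$ carries at least half of the $\ell^2$-mass of $d_Y$. Dyadically decompose $T=T_0\cup\dotsb\cup T_L$ with $L=O(\log K)$. On each $T_l$, combining the trivial lower bound $e(T_l,Y)\ge|T_l|\cdot 2^{-l-1}d_Y^{(1)}$ with $e(T_l,Y)\le\M(G)\sqrt{|T_l||Y|}$ yields $\sqrt{|T_l|}\le 2^{l+1}\M(G)\sqrt{|Y|}/d_Y^{(1)}$. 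A Cauchy--Schwarz over $l$ then bounds the contribution from $T$ by $2\sqrt{L+1}\,\M(G)\sqrt{|Y|}$, while the same level-wise estimate yields $\|d_Y|_T\|_2\le 2\sqrt{L+1}\,\M(G)\sqrt{|Y|}$. Since $T$ carries at least half of the $\ell^2$-mass one has $\|d_Y|_{U\stm T}\|_2\le\|d_Y|_T\|_2$, and a direct Cauchy--Schwarz absorbs the tail contribution $\sum_{u\in U\stm T}x_u d_Y(u)$ into the same bound.

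Plugging this estimate into the sum over dyadic levels of $y$ gives
$$
x^tA\tilde y=\sum_j c_j\sum_{u\in U}x_u\,d_{W_j}(u)\le C\sqrt{\log K+1}\,\M(G)\sum_j c_j\sqrt{|W_j|}
$$
with $c_j\asymp 2^{-j}$, and a final Cauchy--Schwarz over $j$ using $\|y\|_2^2\ge\tfrac14\sum_j c_j^2|W_j|$ reduces the trailing sum to $O(\sqrt{J+1})=O(\sqrt{\log|W|+1})$. Combining with the $O(1)$ loss from truncating $y$ yields the claimed bound; the precise constants $\sqrt{2}$ and $+4$ should be recoverable by tightening the truncation threshold and the level counts.

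The main obstacle is the tail of the degree sequence on $U\stm T$: the hypothesis $\rho(d_Y)\le K$ does not propagate to subsequences of $d_Y$, so an iteration of the top-$k$ truncation cannot be controlled by $\log K$ alone. The device that sidesteps this is to bound the tail not dyadically but through its $\ell^2$-norm, which is at most $\|d_Y|_T\|_2$ and so is controlled by the same dyadic inequalities already used on $T$. Once this is in place, the remaining work is careful bookkeeping of constants across the truncation and the two Cauchy--Schwarz applications.
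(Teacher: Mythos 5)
Your proof is correct (up to the constant bookkeeping you flag at the end) and shares the two-stage structure of the paper's argument, but the route is genuinely different. The paper packages both stages into standalone unit-cube lemmas: Lemma~\refl{ucube1} is applied to $B^tx$ (with $x$ maximising $\|B^tx\|/\|x\|$) to extract a \emph{single} nonzero $\eta\in\{0,1\}^{|W|}$ with $\|B\eta\|\ge\tfrac{2}{\sqrt{\log|W|+4}}\,\|B\|\,\|\eta\|$, via a contradiction/Abel-summation argument that locates one good prefix level set of the sorted vector; Lemma~\refl{ucube3} is then applied to $B\eta=(d_Y(u))_{u\in U}$, by rescaling by the $k$th largest entry, flooring into $[0,\lfloor K\rfloor]^{|U|}$ (which discards the tail while retaining at least half of the $\ell^2$-mass), and invoking the integer-threshold Lemma~\refl{ucube2}. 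You instead decompose the Perron eigenvector $y$ into \emph{all} dyadic level sets $W_j$ and recombine by Cauchy--Schwarz --- the dual of ``find one good level'' --- and your single-level estimate is the contrapositive of Lemma~\refl{ucube3}, proved by a dyadic split of the top-$k$ block with the tail absorbed through its $\ell^2$-mass; the crucial observation that $\rho$-control need not propagate past index $k$, because that block already carries half the $\ell^2$-mass, is the same in both and is exactly what makes the lemma go through. Your inlined version is more elementary and self-contained; the paper's modular version makes the constants $\sqrt{2}$ and $+4$ drop out cleanly, whereas a base-$2$ dyadic split accumulates several compounding factor-of-two losses (e.g.\ $\log_2K$ levels against the paper's $\log K+1$, plus the separate factors of $2$ from the level estimate, the tail estimate, and the truncation of $y$), so recovering the stated constants from your argument will take more than the cosmetic tightening you suggest.
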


Theorems \reft{main-general}--\reft{master} follow immediately from Theorems
\refp{main-general-bip}--\refp{master-bip} using the following simple scheme:
given a graph $G$, apply the appropriate theorem about bipartite graphs to
the bipartite double cover $G\times K_2$, and then use Corollary~\refc{M=M}
along with the fact that $\lmax(G)=\lmax(G\times K_2)$ to return back to the
original graph $G$. For this reason, from now on we concentrate exclusively
on the proofs of Theorems \refp{main-general-bip}--\refp{master-bip}. In the
next section we state three lemmas needed for the proofs, and deduce the
theorems from the lemmas. The lemmas are proved in Section
\refs{proofs-of-the-lemmas}. In Section \refs{example} we give an example
which sets the limit to potentially possible improvements in Theorems
\reft{main-general}--\refp{master-bip}; namely, we construct graphs $G$ of
arbitrarily large order $n$ such that
\begin{equation}\label{e:lmax-M}
  \lmax(G) \gg \left(\frac{\log n}{\log\log n}\right)^{1/8}\, \M(G)
\end{equation}
(with an absolute implicit constant).

\section{Auxiliary Lemmas and Deduction of
  Theorems \refp{main-general-bip}--\refp{master-bip}}\label{s:the-three-lemmas}

The three lemmas stated below in this section show that no vector is ``almost
orthogonal'' simultaneously to all vertices of the unit cube $\{0,1\}^n$;
equivalently, there is no hyperplane to which all vertices of the unit cube
are close simultaneously. Albeit slightly technical, these three lemmas are
in the heart of our argument. Once the lemmas are stated, we show how
Theorems \refp{main-general-bip}--\refp{master-bip} follow from them. The
lemmas themselves are proved in the next section.

By $\|\cdot\|$ we denote the usual Euclidean norm on a finite-dimensional
real vector space. The standard inner product is denoted by
$\<\cdot,\cdot\>$. Thus, for instance, for an integer $n\ge 1$ and a vector
$x\in\R^n$, we have $\<x,x\>=\|x\|^2=n\|x\|_2^2$.

\begin{lemma}\label{l:ucube1}
Let $n\ge 1$ be an integer. For any vector $z\in\R^n$ with non-negative
coordinates, there exists a non-zero vector $\del\in\{0,1\}^n$ such that
  $$ \<z,\del\> \ge \frac{2}{\sqrt{\log n+4}}\,\|z\|\|\del\|. $$
\end{lemma}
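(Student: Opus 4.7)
The plan is, after sorting, to take $\del$ to be the characteristic vector of a prefix of optimal length. Without loss of generality we may assume $z_1\longge z_n\ge 0$; set $T_k:=z_1\longp z_k$ and $N:=\max_{1\le k\le n} T_k/\sqrt k$, and let $\del$ be the characteristic vector of $\{1\longc k\}$ for $k$ realizing this maximum. Then $\<z,\del\>/\|\del\|=N$, so the lemma reduces to the quantitative estimate
$$ \|z\|^2 \le \frac{\log n+4}{4}\,N^2. $$

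The structural core of the proof is summation by parts. Writing $\sum_k z_k^2=\sum_k z_k(T_k-T_{k-1})$ with $T_0=0$ and applying Abel's identity, then inserting $T_k\le N\sqrt k$ and exploiting the monotonicity $z_k-z_{k+1}\ge 0$, I expect to arrive at
$$ \|z\|^2 \le N\sum_{k=1}^n\bigl(\sqrt k-\sqrt{k-1}\bigr)\,z_k $$
(with the convention $\sqrt 0=0$). One application of the Cauchy-Schwarz inequality to the right-hand side and cancellation of a common factor of $\|z\|$ then reduces the entire lemma to the purely numerical inequality
$$ \sum_{k=1}^n\bigl(\sqrt k-\sqrt{k-1}\bigr)^2 \le \frac{\log n+4}{4}. $$

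The main technical obstacle will be proving this numerical bound with the \emph{sharp} constant $\tfrac14$: a direct level-set / integral estimate loses an extra factor of $2$ in $N$, and the crude per-term bound $(\sqrt k-\sqrt{k-1})^2\le 1/(4(k-1))$ would produce an extra additive constant. I would instead establish the telescoping per-term inequality
$$ 4\bigl(\sqrt k-\sqrt{k-1}\bigr)^2 \le \log k-\log(k-1)\quad(k\ge 2), $$
which sums to $4\sum_{k=2}^n(\sqrt k-\sqrt{k-1})^2\le\log n$; adjoining the $k=1$ term (equal to $1$) gives the required bound. Substituting $y:=\sqrt{(k-1)/k}\in(0,1)$, the per-term inequality is equivalent to $f(y):=2(1-y)/(1+y)+\log y\le 0$, and this follows from $f(1)=0$ together with the direct computation $f'(y)=(1-y)^2/(y(1+y)^2)\ge 0$, which shows $f$ is nondecreasing on $(0,1]$ and hence nonpositive there.
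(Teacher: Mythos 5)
Your proof is correct and follows essentially the same route as the paper: after sorting, take $\del$ to be a prefix characteristic vector, use Abel summation plus Cauchy--Schwarz to reduce to the numerical bound $\sum_{k=1}^n(\sqrt k-\sqrt{k-1})^2\le(\log n+4)/4$, the only cosmetic difference being that the paper frames this as a proof by contradiction while you argue directly. You also supply a clean telescoping proof of that numerical bound via $4(\sqrt k-\sqrt{k-1})^2\le\log k-\log(k-1)$, a detail the paper labels ``routine'' and omits.
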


Notice that the estimate of Lemma \refl{ucube1} is tight for $n=1$. For a
less trivial example, consider the vector $z:=(1,1/\sqrt2\longc1/\sqrt n)$,
and notice that for any non-zero $\del\in\{0,1\}^n$ one has
 $\<z,\del\><(2/\sqrt{\log n})\,\|z\|\|\del\|$.

\begin{lemma}\label{l:ucube2}
Let $n,\Del\ge1$ be integers. For any integer vector $z\in[0,\Del]^n$ there
exists a non-zero vector $\del\in\{0,1\}^n$ such that
  $$ \<z,\del\> \ge \frac1{\sqrt{\log\Del+1}}\,\|z\|\|\del\|. $$
\end{lemma}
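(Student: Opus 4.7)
The plan is to use threshold vectors. For each integer $t$ with $1\le t\le\Del$, let $\del^{(t)}\in\{0,1\}^n$ be the characteristic vector of $\{i:z_i\ge t\}$, and set
  $$ g(t):=\|\del^{(t)}\|^2=|\{i:z_i\ge t\}|,\qquad
     f(t):=\<z,\del^{(t)}\>=\sum_{i:z_i\ge t}z_i. $$
The goal reduces to showing that $R(t):=f(t)^2/g(t)=\<z,\del^{(t)}\>^2/\|\del^{(t)}\|^2$ satisfies $\max_t R(t)\ge\|z\|^2/(\log\Del+1)$; a $t^*$ realizing the maximum will then furnish the desired $\del:=\del^{(t^*)}$.

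Two observations drive the argument. First, every $z_i$ appearing in $f(t)$ is at least $t$, so $f(t)\ge t\,g(t)$ (the average value among the thresholded coordinates is at least $t$); I rewrite this as $f(t)\le R(t)/t$. Second, since the $z_i$ are non-negative integers, the layer-cake identity yields
  $$ \sum_{t=1}^{\Del}f(t)=\sum_{t=1}^{\Del}\sum_{i:z_i\ge t}z_i
     =\sum_i z_i^2=\|z\|^2. $$

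Combining these, with $R_{\max}:=\max_{1\le t\le\Del}R(t)$, I get
  $$ \|z\|^2=\sum_{t=1}^{\Del}f(t)\le R_{\max}\sum_{t=1}^{\Del}\frac1t
     \le R_{\max}(\log\Del+1), $$
the last step being the standard harmonic-sum bound $\sum_{t=1}^{\Del}1/t\le\log\Del+1$. Hence $R_{\max}\ge\|z\|^2/(\log\Del+1)$, and taking $\del:=\del^{(t^*)}$ at a maximizer $t^*$ yields
  $$ \frac{\<z,\del\>}{\|z\|\,\|\del\|}
     =\frac{\sqrt{R_{\max}}}{\|z\|}\ge\frac1{\sqrt{\log\Del+1}}. $$

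I do not anticipate a real obstacle; the argument is elementary once the layer-cake identity is coupled with the ``average-at-least-threshold'' inequality $f(t)\ge t\,g(t)$. The only check is non-degeneracy: $\del^{(t^*)}\ne 0$ holds whenever $z\ne 0$, and the lemma is vacuous for $z=0$.
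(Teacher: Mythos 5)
Your proof is correct and is essentially the paper's argument in different dress: your $g(t),f(t)$ are the paper's $\|\del_i\|^2,\<\del_i,z\>$, the inequality $f(t)\ge t\,g(t)$ is the paper's $\<\del_i,z\>\ge i\|\del_i\|^2$, and your layer-cake identity $\sum_t f(t)=\|z\|^2$ is precisely the paper's double-sum evaluation $\sum_i\sum_{j\ge i} jn_j=\sum_j j^2 n_j=\|z\|^2$. The only cosmetic difference is that you argue directly via a maximizer $R_{\max}$ whereas the paper runs the same chain of inequalities as a proof by contradiction.
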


For our next lemma the reader may need to recall the definition of the
function $\rho$ introduced immediately after the statement of Theorem
\reft{main-general-sparse}.
\begin{lemma}\label{l:ucube3}
Let $n\ge 1$ be an integer. For any vector $z\in\R^n$ with non-negative
coordinates, there exists a non-zero vector $\del\in\{0,1\}^n$ such that
  $$ \<z,\del\> \ge \frac1{\sqrt{8(\log\rho(z)+1)}} \, \|z\|\|\del\|. $$
\end{lemma}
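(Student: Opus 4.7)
The plan is to apply a dyadic level-set decomposition to the coordinates of $z$. After permuting coordinates (which affects neither side of the inequality, since $\{0,1\}^n$ is permutation-invariant), I may assume $z_1\ge z_2\ge\dotsb\ge z_n\ge 0$; the case $z=0$ is trivial.

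Next I would exploit the defining property of $\rho=\rho(z)$: the associated index $k$ satisfies $z_1^2+\dotsb+z_k^2\ge\tfrac12\|z\|^2$, and for every $i\le k$ we have $z_k\le z_i\le z_1=\rho\,z_k$. Thus the coordinates indexed by $[k]$ lie in a single interval of multiplicative length $\rho$ and carry at least half of the total $\ell^2$-mass of $z$. This is what will let the number of dyadic scales be bounded by $\log\rho$ rather than $\log n$.

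The main step is the dyadic partition of $[k]$ according to coordinate magnitude: for $j=0,1,\dotsc,J-1$ set
\[
B_j := \{\, i\in[k] : z_1\cdot 2^{-(j+1)} < z_i \le z_1\cdot 2^{-j}\,\},
\]
where $J$ is chosen just large enough for the $B_j$ to cover $[k]$, so that $J\le\log_2\rho+1$. By pigeonhole some bin $B_{j^*}$ carries $\ell^2$-mass at least $\|z\|^2/(2J)$. Setting $\del:=\mathbf{1}_{B_{j^*}}$ and $\alpha:=z_1\cdot 2^{-(j^*+1)}$, the bounds $\<z,\del\>>\alpha|B_{j^*}|$ together with $\sum_{i\in B_{j^*}}z_i^2\le 4\alpha^2|B_{j^*}|$ combine to give $\<z,\del\>^2/(\|z\|^2\|\del\|^2) > \alpha^2 |B_{j^*}|/\|z\|^2 \ge 1/(8J)$, which is the desired form.

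The delicate part---where most of the care is required---is matching the precise constant $8$ and the natural logarithm in the statement. The naive dyadic partition with factor $2$ yields $1/\sqrt{8(\log_2\rho+1)}$; sharpening this to $1/\sqrt{8(\log\rho+1)}$ calls for either optimizing the dyadic ratio $\lambda$ (trading off the factor $\lambda^2$ coming from the intra-bin spread against $J\le\log_\lambda\rho+1$) or decomposing $[k]$ adaptively rather than on a fixed dyadic scale. Once the basic scheme above is set up, this last adjustment is a bookkeeping exercise rather than a conceptual difficulty.
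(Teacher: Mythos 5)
Your scheme gets the right qualitative order $\sqrt{\log\rho+1}$, but it does not reach the stated constant, and I do not think the gap is bookkeeping: the fixed-ratio slab decomposition provably falls short no matter how you tune $\lambda$. Carrying through your argument with slabs of ratio $\lambda$, the best bin gives
$\langle z,\del\rangle^2/(\|z\|^2\|\del\|^2)\ge 1/(2\lambda^2 J)$ with $J\le\log_\lambda\rho+1$, so you need $2\lambda^2(\log\rho/\log\lambda+1)\le 8(\log\rho+1)$, i.e.\ (for large $\rho$) $\lambda^2/\log\lambda\le 4$. But $\lambda^2/\log\lambda$ is minimized at $\lambda=\sqrt e$, where it equals $2e>5.4$, so the optimized dyadic-slab bound gives a constant on the order of $4e\approx 10.9$, strictly worse than $8$. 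The loss is structural: you are discarding all but one slab, whereas the constant in the statement needs the information from all levels simultaneously. The paper's route around this is quite different and worth knowing: one rescales by $z_k$ and rounds \emph{down} to integers, $z'=(\lfl z_i/z_k\rfl)_i\in[0,\Del]^n$ with $\Del=\lfl\rho(z)\rfl$, and then invokes Lemma~\refl{ucube2} for the integer vector $z'$. Lemma~\refl{ucube2} itself is the step you are missing: it considers \emph{superlevel} sets $\del_i=\mathbf 1\{z'\ge i\}$ for every $i\in[1,\Del]$ and exploits the telescoping identity $\sum_{i=1}^\Del\langle\del_i,z'\rangle=\|z'\|^2$ together with the harmonic bound $\sum_{i\le\Del}1/i\le\log\Del+1$; that is where the clean $\log\Del+1$ (with natural log and constant $1$) comes from. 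The factor $8$ is then just $2$ (half the mass is in $[k]$) times $4$ (from $\lfl x\rfl\ge x/2$ for $x\ge1$). So the correct fix is to replace your slab-and-pigeonhole step with a superlevel-set argument (or simply reduce to Lemma~\refl{ucube2}); otherwise you prove a version of the lemma with a weaker constant.
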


For a real matrix $A$, by $\|A\|$ we denote the operator norm of $A$; that
is,
  $$ \|A\| = \sup_{x\ne 0} \frac{\|Ax\|}{\|x\|}, $$
with the Euclidean norms in the numerator and the denominator in the
right-hand side. We recall
% Reference?
that the operator norm of a symmetric matrix is equal to its largest
eigenvalue, and that if $A$ is a block matrix of the form
 $\begin{pmatrix} 0 & B \\ B^t & 0 \end{pmatrix}$, then
$\|A\|=\|B\|=\|B^t\|$. As a result, if $G$ is a bipartite graph with the
biadjacency matrix $B$, then $\lmax(G)=\|B\|=\|B^t\|$.

We now deduce Theorems \refp{main-general-bip}--\refp{master-bip} from Lemmas
\refl{ucube1}--\refl{ucube3}.

\begin{proof}[Proof of Theorem \refp{main-general-bip}]
Write $m:=|U|$ and $n:=|W|$ and let $B$ denote the biadjacency matrix of $G$,
with rows corresponding to the elements of $U$, and columns to the elements
of $W$. Fix $x\in\R^m\stm\{0\}$ with $\|B^tx\|=\|B\|\|x\|$. Since all entries
of $B$ are non-negative, we can assume that all coordinates of $x$ are
non-negative. (If $x$ have both positive and negative coordinates, then
switching the signs of all negative coordinates yields a vector
$x'\in\R^m\stm\{0\}$ with $\|B^tx'\|/\|x'\|\ge\|B^tx\|/\|x\|$.) Hence, all
coordinates of the vector $B^tx\in\R^n$ are non-negative, too, and applying
Lemma~\refl{ucube1} to this vector, we find a non-zero vector
$\eta\in\{0,1\}^n$ so that
  $$ \<B^tx,\eta\> \ge \frac{2}{\sqrt{\log n+4}} \, \|B^tx\|\|\eta\|. $$
Since $\<B^tx,\eta\>=\<x,B\eta\>\le\|x\|\|B\eta\|$ and $\|B^tx\|=\|B\|\|x\|$,
this gives
\begin{equation}\label{e:Beta-large}
  \|B\eta\|\ge \frac{2}{\sqrt{\log n+4}}\, \|B\|\|\eta\|.
\end{equation}
Applying now Lemma \refl{ucube1} to the vector $B\eta\in\R^m$, we find a
non-zero vector $\xi\in\{0,1\}^m$ with
  $$ \<B\eta,\xi\> \ge \frac2{\sqrt{\log m+4}} \, \|B\eta\|\|\xi\|. $$
Combining this with \refe{Beta-large}, we get
  $$ \xi^t B\eta = \<B\eta,\xi\>
       \ge \frac4{\sqrt{(\log m+4)(\log n+4)}}\,\|B\|\|\xi\|\|\eta\|. $$
To complete the proof we notice that if $X\seq U$ is the subset with the
characteristic vector $\xi$, and $Y\seq W$ is the subset with the
characteristic vector $\eta$, then $|X|=\|\xi\|^2,\ |Y|=\|\eta\|^2$, and
$e(X,Y)=\xi^tB\eta=\<\xi,B\eta\>$, whence
  $$ \M(G) \ge \frac{e(X,Y)}{\sqrt{|X||Y|}}
            = \frac{\xi^tB\eta}{\|\xi\|\|\eta\|}
                      \ge \frac4{\sqrt{(\log m+4)(\log n+4)}}\,\|B\|. $$
The result now follows in view of $\|B\|=\lmax(G)$.
\end{proof}

\begin{proof}[Proof of Theorem \refp{main-general-sparse-bip}]
We act as in the proof of Theorem \refp{main-general-bip}, except that the
second application of Lemma \refl{ucube1} is replaced with an application of
Lemma \refl{ucube2}. Specifically, let $m,n,B,x$, and $\eta$ be as in the
proof of Theorem \refp{main-general-bip}, so that \refe{Beta-large} holds
true. Applying Lemma \refl{ucube2} to the vector $B\eta\in[0,\Del_U]^m$, we
find a non-zero vector $\xi\in\{0,1\}^m$ with
  $$ \<B\eta,\xi\> \ge \frac1{\sqrt{\log\Del_U+1}} \, \|B\eta\|\|\xi\|. $$
Comparing with \refe{Beta-large} we obtain
  $$ \xi^tB\eta \ge
          \frac2{\sqrt{(\log\Del_U+1)(\log n+4)}}\,\|B\|\|\xi\|\|\eta\| $$
and the rest of the argument is exactly as in the proof of Theorem
\refp{main-general-bip}.
\end{proof}

\begin{proof}[Proof of Theorem \refp{master-bip}]
We define $m,n,B,x$ and $\eta$ as in the proofs of Theorems
\refp{main-general-bip} and \refp{main-general-sparse-bip}, and this time
replace the second application of Lemma \refl{ucube1} in Theorem
\refp{main-general-bip} with an application of Lemma \refl{ucube3} to the
vector $B\eta$, to find $\xi\in\{0,1\}^m$ such that
  $$ \<B\eta,\xi\> \ge \frac1{\sqrt{8(\log K+1)}} \, \|B\eta\|\|\xi\|. $$
The proof then can be completed as those of Theorems \refp{main-general-bip}
and \refp{main-general-sparse-bip}.
\end{proof}

An important (though somewhat implicit) ingredient of the proofs of Theorems
\refp{main-general-bip}--\refp{master-bip} is the assertion that for any
matrix $B$ with non-negative entries, denoting by $n$ the number of columns
of $B$, we can find a non-zero vector $\eta\in\{0,1\}^n$ satisfying
\refe{Beta-large}. We notice that the coefficient in the right-hand side of
\refe{Beta-large} is essentially best possible, as one can easily check
taking $B$ to be the matrix of the orthogonal projection of $\R^n$ onto the
vector $(1,1/\sqrt{2}\longc 1/\sqrt{n})$. It is quite possible, however, that
this coefficient can be improved in the special case where the entries of $B$
are restricted to the values $0$ and $1$. A result of this sort would
immediately lead to an improvement in Theorems
\reft{main-general}--\refp{master-bip}.

\section{Proofs of Lemmas \refl{ucube1}--\refl{ucube3}}%
  \label{s:proofs-of-the-lemmas}

\begin{proof}[Proof of Lemma \refl{ucube1}]
We write $z=(z_1\longc z_n)$ and, without loss of generality, assume that
\begin{equation}\label{e:realg1}
  z_1 \longge z_n \ge 0\quad \text{and}\quad \|z\|=1.
\end{equation}
Let $\tau := 2/\sqrt{\log n+4}$. We will show that there exists $k\in[n]$
with $z_1\longp z_k\ge\tau\sqrt k$; choosing then $\del$ to be the vector
with the first $k$ coordinates equal to $1$ and the rest equal to $0$
completes the proof.

Suppose, for a contradiction, that $z_1\longp z_k<\tau\sqrt{k}$ for
$k=1\longc n$. Multiplying this inequality by $z_k-z_{k+1}$ for each
$k\in[n-1]$, and by $z_n$ for $k=n$, adding up the resulting estimates, and
rearranging the terms, we obtain
  $$ z_1^2\longp z_n^2 < \tau \big(z_1+(\sqrt2-1)z_2
                                      \longp(\sqrt n-\sqrt{n-1})z_n \big). $$
Using Cauchy-Schwartz and recalling \refe{realg1} gives
  $$ 1 < \tau \Big( \sum_{k=1}^n
             \big(\sqrt k-\sqrt{k-1}\big)^2 \Big)^{1/2}
                                        \le \frac12\,\tau \sqrt{\log n +4} $$
(we omit the routine estimate of the last sum), a contradiction.
\end{proof}

\begin{proof}[Proof of Lemma \refl{ucube2}]
For every $i\in[0,\Del]$, let $n_i$ denote the number of coordinates of $z$
which are equal to $i$, so that $n=n_0+n_1\longp n_\Del$ and
$\|z\|^2=n_1\longp \Del^2n_\Del$. Consider the vector $\del_i\in\{0,1\}^n$
with each coordinate being $1$ whenever the corresponding coordinate of $z$
is at least $i$, and being $0$ otherwise. We have
 $\|\del_i\|^2=n_i\longp n_\Del$ and, as a result,
  $$ \<\del_i,z\> = in_i\longp\Del n_\Del \ge i \|\del_i\|^2. $$
Consequently, if
  $$ \<\del_i,z\> < \tau \|z\|\|\del_i\| $$
holds for some $\tau>0$ and every $i\in[1,\Del]$, then
  $$ \<\del_i,z\>^2 < \tau^2 \|z\|^2\cdot\frac1i\,\<\del_i,z\>, $$
implying
  $$ i(in_i\longp \Del n_\Del) = i\<\del_i,z\>
                                  < \tau^2 \|z\|^2, \qquad i\in[1,\Del]. $$
Dividing through by $i$ and taking the sum over all $i\in[1,\Del]$ yields
  $$ \tau^2 \|z\|^2 (\log\Del+1) > \sum_{i=1}^\Del \sum_{j=i}^\Del jn_j
                                    = \sum_{j=1}^\Del j^2 n_j = \|z\|^2, $$
and the assertion follows.
\end{proof}

\begin{proof}[Proof of Lemma \refl{ucube3}]
Without loss of generality we assume that $z=(z_1\longc z_n)$ with
$z_1\longge z_n\ge 0$. Let $k\in[1,n]$ be the smallest integer with
$z_1^2\longp z_k^2\ge z_{k+1}^2\longp z_n^2$, as in the definition of the
quantity $\rho(z)$. Writing $\Del:=\lfloor\rho(z)\rfloor$ and applying Lemma
\refl{ucube2} to the vector
 $z':=(\lfl z_1/z_k\rfl\longc\lfl z_n/z_k\rfl)\in[0,\Del]^n$, we find a
non-zero $\del\in\{0,1\}^n$ so that
  $$ \<z',\del\> \ge \frac1{\sqrt{\log\Del+1}}\,\|z'\|\|\del\|. $$
It remains to notice that $\Del\le\rho(z)$, $\<z',\del\>\le\<z,\del\>/z_k$,
and
  $$ \|z'\|^2 =   \sum_{i=1}^k \lfl\frac{z_i}{z_k}\rfl^2
              >   \frac1{4z_k^2} \sum_{i=1}^k z_i^2
              \ge \frac{\|z\|^2}{8z_k^2}. $$
\end{proof}

\section{Graphs with $\M(G)=o(\lmax(G))$}\label{s:example}

Our goal in this section is to construct graphs $G$ of arbitrarily large
order with the largest eigenvalue $\lmax(G)$ exceeding considerably the
maximum bi-average degree $\M(G)$, cf.~\refe{lmax-M}. This will show that
Theorems \reft{main-general}--\refp{master-bip} are reasonably sharp.

The idea behind our construction is to take $G$ to be a graph whose adjacency
matrix $A$ has a large spectral gap, and has its Perron-Frobenius
eigenvector, say $e$, highly non-aligned with any $(0,1)$-vector. The former
property ensures that for any vector $\del$, the norm $\|A\del\|$ is
controlled by the projection of $\del$ onto $e$, and then the latter property
shows that whenever $\del$ is a $(0,1)$-vector, $\|A\del\|$ is small. This
results in $\M(G)$ being small. In practice, we take $A$ to be a high tensor
power of a matrix with a large spectral gap. The spectral gap of the original
matrix is then inherited by $A$, whereas the property of being non-aligned
with $(0,1)$-vectors, somewhat unexpectedly, is acquired by passing to tensor
powers.

Let $H$ denote the entropy function extended by continuity onto the interval
$[0,1]$; thus, $H(x)=-x\log x -(1-x)\log(1-x)$ for $x\in(0,1)$, and
$H(0)=H(1)=0$.

The following estimates are easy to derive using the Stirling formula:
\begin{equation}\label{e:binest}
  \frac13\,\frac1{\sqrt q}\, e^{tH(q/t)} < \binom tq
                < \frac23\,\frac1{\sqrt q}\, e^{tH(q/t)},\quad 1\le q\le t/2.
\end{equation}
We also need the following large deviation inequality.
\begin{lemma}\label{l:large-deviation}
For any real $\lam>0$ and positive integer $q$ and $t$ with $q\le
t/(\lam+1)$, we have
  $$ \sum_{j=0}^q \binom tj \lam^{t-j}  \le \lam^{t-q} e^{tH(q/t)}. $$
\end{lemma}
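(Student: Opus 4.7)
The plan is to deploy a standard Chernoff/Markov style argument with an optimization parameter. Let me fix real $s\ge 1$; then for every $j\in\{0,1\longc q\}$ we have $s^{q-j}\ge 1$, so each term of the sum is bounded above when multiplied by $s^{q-j}$. Consequently
$$ \sum_{j=0}^q \binom tj \lam^{t-j}
      \le s^q \sum_{j=0}^q \binom tj \lam^{t-j} s^{-j}
      \le s^q \sum_{j=0}^t \binom tj \lam^{t-j} s^{-j}
      = s^q \bigl(\lam+1/s\bigr)^t, $$
the last equality being the binomial theorem. This reduces the problem to choosing $s$ optimally.

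Next I would optimize the right-hand side over $s\ge 1$. Differentiating $q\log s + t\log(\lam+1/s)$ and setting the derivative to zero leads to the candidate $s_0 := (t-q)/(q\lam)$. The hypothesis $q\le t/(\lam+1)$ is exactly the condition $t-q\ge q\lam$, i.e.\ $s_0\ge 1$, so $s_0$ lies in the admissible range. (Second-derivative or monotonicity considerations confirm it is a minimum; in fact the formula $s^q(\lam+1/s)^t$ tends to $\infty$ at both endpoints $s=1$ could fail and $s\to\infty$, so the interior critical point is the minimum on $[1,\infty)$ whenever it lies there.)

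Substituting $s=s_0$, I expect the simplification $\lam+1/s_0 = \lam t/(t-q)$, and hence
$$ s_0^q\bigl(\lam+1/s_0\bigr)^t
      = \Bigl(\frac{t-q}{q\lam}\Bigr)^{\!q}\Bigl(\frac{\lam t}{t-q}\Bigr)^{\!t}
      = \lam^{t-q}\cdot\frac{t^t}{q^q(t-q)^{t-q}}. $$
It remains to recognize the last fraction as $e^{tH(q/t)}$, which is immediate from the definition
$tH(q/t) = t\log t - q\log q -(t-q)\log(t-q)$.

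The argument is essentially routine once one guesses the Chernoff device, so I do not foresee a serious obstacle; the only point that requires care is verifying that the unconstrained optimizer $s_0$ of $s^q(\lam+1/s)^t$ is admissible (i.e.\ $s_0\ge 1$), and this is precisely where the hypothesis $q\le t/(\lam+1)$ is used. The sharpness of the resulting bound is not claimed, but this style of proof produces the exact constant needed for the statement.
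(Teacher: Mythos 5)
Your Chernoff-tilting argument is correct, and after unwinding it turns out to coincide with the paper's proof up to a change of variables, though the two are framed quite differently. The paper divides the inequality by $\lambda^{t-q}$, notes that the left side $\sum_{j\le q}\binom{t}{j}\lambda^{q-j}$ is increasing in $\lambda$ while the right side $e^{tH(q/t)}$ is constant, and so reduces to the boundary value $\lambda=(t-q)/q$, where it simply bounds the partial sum by the full binomial sum $(\lambda+1)^t$ and simplifies. Your optimal tilt $s_0=(t-q)/(q\lambda)$ implements precisely this rescaling: replacing $\lambda$ by $\lambda s_0=(t-q)/q$ and then taking the full sum is what $s_0^q(\lambda+1/s_0)^t$ computes, and the hypothesis $q\le t/(\lambda+1)$ appears in both proofs as exactly the admissibility condition ($s_0\ge1$ in yours, $\lambda\le(t-q)/q$ in theirs). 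The paper's phrasing avoids the calculus step (no derivative, no verification that the critical point is a minimum on $[1,\infty)$), which is why the author calls the proof ``seemingly vacuous''; your version is longer but arrives via the standard and well-motivated large-deviations template, so a reader who knows Chernoff bounds will find the constant $e^{tH(q/t)}$ unsurprising rather than magical. One small point worth making explicit in a final write-up: the hypotheses force $1\le q<t$ (since $\lambda>0$ gives $q\le t/(\lambda+1)<t$), so $t-q>0$ and $s_0$ is well-defined; you implicitly use this when writing $s_0=(t-q)/(q\lambda)$ and again in the entropy identity.
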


\begin{proof}
Dividing through both sides of the inequality by $\lam^{t-q}$, we get an
increasing function of $\lam$ in the left-hand side and a quantity,
independent of $\lam$, in the right-hand side. Therefore, the general case
will follow from that where $q=t/(\lam+1)$, which we now assume to hold. For
brevity we write $p:=q/t$, so that $\lam+1=p^{-1}$ and $\frac{1-p}p=\lam$.
The left-hand side of the inequality in question can now be estimated from
above by
    $$ (\lam+1)^t = p^{-t} = \lam^{(1-p)t} p^{-pt}(1-p)^{-(1-p)t}
                                                = \lam^{t-q} e^{tH(q/t)},  $$
as wanted.
\end{proof}

We remark that, despite its seemingly vacuous proof, the estimate of Lemma
\refl{large-deviation} is surprisingly sharp: say, numerical computations
suggest that for any $q,t$, and $\lam$, the right-hand side of the inequality
of the lemma is at most twice larger than its left-hand side.

The reader is urged to compare our next lemma against Lemmas
\refl{ucube1}--\refl{ucube3}.
\begin{lemma}\label{l:tensor-optimization}
For real $\lam\ge 4$ and integer $s,t\ge 1$, write $n=(2s)^t$, and suppose
that $z\in\R^n$ is a vector with $\binom tj s^t$ coordinates equal to
$\lam^j$ for each $j\in[0,t]$. Then for every $\del\in\{0,1\}^n$ we have
  $$ \<z,\del\> \le \frac{4\lam}{\sqrt[4]{t}}\,\|z\|\|\del\|. $$
\end{lemma}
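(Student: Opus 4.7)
The plan is to reduce the inequality to an estimate on cumulative partial sums of the sorted coordinates of $z$, and then combine Lemma~\refl{large-deviation} with the Stirling-type bounds \refe{binest}.

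\textbf{Reduction.} By the rearrangement inequality, for any $\del\in\{0,1\}^n$ with support size $m:=\|\del\|^2$, the inner product $\langle z,\del\rangle$ is maximized when $\del$ is the indicator of the $m$ largest coordinates of $z$; denote this maximum by $f(m)$. The coordinates of $z$ form $t+1$ tiers: for each $k\in\{0,1,\ldots,t\}$ exactly $\binom{t}{k}s^t$ of them equal $\lam^{t-k}$. Setting the cumulative counts and sums
$$ M_q:=s^t\sum_{k=0}^q\binom{t}{k},\qquad S_q:=s^t\sum_{k=0}^q\binom{t}{k}\lam^{t-k}, $$
$f$ is piecewise linear on the intervals $[M_{q-1},M_q]$ with $f(M_q)=S_q$. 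On each such piece $f(m)=a+bm$ with $a,b\ge 0$, so $f(m)^2/m=a^2/m+2ab+b^2m$ is convex in $m$, and hence its maximum on $[M_{q-1},M_q]$ is attained at an endpoint. Therefore $\sup_m f(m)^2/m=\max_q S_q^2/M_q$, and since $\|z\|^2=s^t(1+\lam^2)^t$, the lemma reduces to
$$ \frac{S_q^2}{M_q}\le\frac{16\lam^2\,\|z\|^2}{\sqrt t}\quad\text{for all }q\in\{0,\ldots,t\}. $$

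\textbf{Estimating the partial sums.} A short calculus argument (differentiating $H(r)+2(1-r)\log\lam$ in $r$) shows $\lam^{2(t-q)}e^{tH(q/t)}\le(1+\lam^2)^t$, with equality at the critical value $q^\sharp:=t/(1+\lam^2)$. The terms $\binom{t}{k}\lam^{t-k}$ are unimodal in $k$ with mode near $t/(\lam+1)$, giving two complementary estimates on $S_q/s^t$. For $q\le t/(2(\lam+1))$---a range that contains $q^\sharp$ as soon as $\lam\ge 1+\sqrt 2$, in particular for $\lam\ge 4$---the consecutive-term ratio $\lam k/(t-k+1)$ is at most $1/2$, so a geometric-series bound gives $\sum_{k=0}^q\binom{t}{k}\lam^{t-k}\le 2\binom{t}{q}\lam^{t-q}$. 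For $t/(2(\lam+1))<q\le t/(\lam+1)$, Lemma~\refl{large-deviation} gives $\sum_{k=0}^q\binom{t}{k}\lam^{t-k}\le\lam^{t-q}e^{tH(q/t)}$. Finally, $M_q/s^t\ge\binom{t}{q}$ and \refe{binest} yields $\frac{1}{3\sqrt q}e^{tH(q/t)}\le\binom{t}{q}\le\frac{2}{3\sqrt q}e^{tH(q/t)}$ for $1\le q\le t/2$. For $q>t/(\lam+1)$ the trivial bound $S_q\le s^t(1+\lam)^t$ together with $M_q\ge s^t\binom{t}{\lceil t/(\lam+1)\rceil}$ suffices, because the arithmetic inequality $(1+\lam)\lam^{\lam/(\lam+1)}<1+\lam^2$ (valid for $\lam\ge e$, hence for $\lam\ge 4$) yields an exponential-in-$t$ decay that easily beats the target $16\lam^2/\sqrt t$.

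\textbf{Main obstacle.} The delicate case is $q\approx q^\sharp$: both factors of $S_q^2/M_q$ are near their ``mode-contributions,'' and Lemma~\refl{large-deviation} by itself loses a factor of $\sqrt{q^\sharp}\sim\sqrt t/\lam$ relative to what is needed. This gap is closed by the geometric-series refinement $S_q\le 2s^t\binom{t}{q}\lam^{t-q}$, applicable at $q=q^\sharp$ precisely because the hypothesis $\lam\ge 4$ forces $q^\sharp\le t/(2(\lam+1))$. Combining with $M_q\ge s^t\binom{t}{q}$ gives $S_q^2/M_q\le 4s^t\binom{t}{q}\lam^{2(t-q)}$; invoking the Stirling upper bound $\binom{t}{q^\sharp}\lam^{2(t-q^\sharp)}\le \frac{2}{3\sqrt{q^\sharp}}(1+\lam^2)^t$ and the estimate $\sqrt{1+\lam^2}\le\sqrt 2\,\lam$ extracts exactly the local-CLT gain of $\lam/\sqrt t$, which yields the target factor $16\lam^2/\sqrt t$ with ample margin.
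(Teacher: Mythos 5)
Your reduction to the inequality $S_q^2/M_q\le 16\lam^2\|z\|^2/\sqrt t$ and the subsequent three-way case split by where $q/t$ lies relative to $1/(2(\lam+1))$ and $1/(\lam+1)$ is essentially the paper's own argument: the same reduction, the same three tools (geometric-series domination, Lemma~\refl{large-deviation}, trivial bound plus Stirling), with only slightly different thresholds (the paper uses $1/(2\lam)$ rather than $1/(2(\lam+1))$ as the boundary of the geometric-series range). One point to tighten: your final paragraph only checks the target inequality at $q=q^\sharp$, whereas the geometric-series bound $S_q^2/M_q\le 4s^t\binom{t}{q}\lam^{2(t-q)}$ must be closed for every $q$ in that range; you are implicitly invoking the unimodality of $q\mapsto\binom{t}{q}\lam^{2(t-q)}$ (which peaks near $q^\sharp$), and you should state this, as well as handle the degenerate sub-cases $q=0$ and $q^\sharp<1$ where \refe{binest} cannot be applied. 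Likewise, the middle range $t/(2(\lam+1))<q\le t/(\lam+1)$ needs an explicit check that the factor $\sqrt q$ coming from \refe{binest} is absorbed by the exponential-in-$t$ decay of $\lam^{2(t-q)}e^{tH(q/t)}/(1+\lam^2)^t$ away from $q^\sharp$ --- this is true but not automatic, and the paper carries out a short calculus verification there that your sketch omits.
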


\begin{proof}
Observing that $\|\del\|^2$ is the number of coordinates of $\del$, equal to
$1$, and writing
  $$ \|\del\|^2 = \sum_{j=0}^q \binom tj s^t + r, $$
we have to show that
  $$ \sum_{j=0}^q \binom tj s^t \lam^{t-j} + r\lam^{t-q-1}
        \le \frac{4\lam}{\sqrt[4]{t}} \, \|z\|
                                  \, \sqrt{\sum_{j=0}^q \binom tj s^t + r} $$
for all $0\le q<t$ and $0\le r\le\binom t{q+1} s^t$. For a suitable choice of
$K_1,K_2$, and $K_3$ (depending on $\lam,s,t$, and $q$), this inequality can
be re-written as
  $$ \frac{r+K_1}{\sqrt{r+K_2}} \le K_3. $$
Denoting the left-hand side by $f(r)$, we have
  $$ f'(r) = \frac{r+(2K_2-K_1)}{2(r+K_2)^{3/2}}. $$
Consequently, either $f(r)$ is monotonic on any given closed interval, or it
is decreasing on some initial segment of the interval and then increasing on
the remaining segment. In any case, the maximum value of $f$ on the interval
is attained at one of its endpoints. Hence, without loss of generality, we
can focus on the case where $r=0$ or $r=\binom t{q+1} s^t$; in other words,
it suffices to prove that
  $$ \sum_{j=0}^q \binom tj s^t \lam^{t-j}
        \le \frac{4\lam}{\sqrt[4]{t}} \, \|z\|
                  \, \sqrt{\sum_{j=0}^q \binom tj s^t};\quad 0\le q\le t. $$
Observing that $\|z\|^2=s^t(\lam^2+1)^t$ and setting
  $$ S_q := \sum_{j=0}^q \binom tj \lam^{t-j}
                           \ \text{and}\ \sig_q := \sum_{j=0}^q \binom tj, $$
we further rewrite the inequality to be proved as
\begin{equation}\label{e:toprove}
  S_q \le \frac{4\lam}{\sqrt[4]{t}} \, (\lam^2+1)^{t/2}\, \sqrt{\sig_q};
        \quad 0\le q\le t.
\end{equation}

Since $S_0=\lam^t$ and $\sig_0=1$, we have
\begin{multline*}
  \left( \frac{4\lam}{\sqrt[4]t}\, (\lam^2+1)^{t/2} \sqrt{\sig_0}
                                                       \, S_0^{-1} \right)^2
     \ge \frac{16\lam^2}{\sqrt t}\, (1+\lam^{-2})^t  \\
         \ge \frac{16\lam^2}{\sqrt t}\,(1+t\lam^{-2})
            = 16\lam\,\big(\lam t^{-1/2} + \lam^{-1}t^{1/2}\big)
               \ge 32\lam > 1.
\end{multline*}
This establishes the case where $q=0$, and we assume below that $q\ge 1$.

Let $\kap:=q/t$. We proceed by cases, splitting the interval $(0,1]$ as
  $$ \textstyle
            \left(0,\frac1{2\lam}\right]
       \cup \left[\frac1{2\lam},\frac1{\lam+1}\right]
       \cup \left[\frac1{\lam+1},\frac12\right]
       \cup \left[\frac12,1\right] $$
and considering the subinterval into which $\kap$ falls.
%depending on whether $0<\kap\le1/(2\lam)$, $1/(2\lam)\le\kap\le1/(\lam+1)$,
%$1/(\lam+1)\le\kap\le1/2$, or $\kap\ge 1/2$.

\medskip\noindent 1)\,
Suppose first that
\begin{equation}\label{e:optcase1}
  0 < \kap \le \frac1{2\lam}.
\end{equation}
In this case, for each $j\le q$ we have
  $$ \frac{\binom t{j-1} \lam^{t-(j-1)}}{\binom tj \lam^{t-j}}
       = \frac{\lam j}{t-j+1} < \frac{\lam}{\kap^{-1}-1}
           \le \frac{\lam}{2\lam-1} = 1-\eta^{-1}, $$
where
\begin{equation}\label{e:optcase1loc2}
  \eta = \frac{2\lam-1}{\lam-1}.
\end{equation}
Consequently,
  $$ S_q \le \binom tq \lam^{t-q} \sum_{i=0}^\infty (1-\eta^{-1})^i =
                                              \eta \binom tq \lam^{t-q}, $$
and since $\sig_q\ge\binom tq$, it suffices to show that
  $$ \eta^2 \binom tq \lam^{2t-2q}
                            \le \frac{16\lam^2}{\sqrt t}\,(\lam^2+1)^t. $$
Using \refe{binest} and observing that $\eta<4$ (as it follows from
\refe{optcase1loc2} and the assumption $\lam\ge 4$), this can be further
reduced to
  $$ \frac{1}{\sqrt{\kap}}\,e^{tH(\kap)} \lam^{2t(1-\kap)}
                                                 \le \lam^2\,(\lam^2+1)^t, $$
and, by passing to logarithms, dividing through by $t$, and rearranging the
terms, to
  $$ \log(\lam^2+1) - 2(1-\kap)\log\lam
                                \ge H(\kap) - \frac1{2t}\log(\lam^4\kap) . $$
Optimizing by $\lam$, it is not difficult to see that
$\log(\lam^2+1)-2(1-\kap)\log\lam\ge H(\kap)$ for all $\lam>0$. This settles
the case where $\lam^4\kap\ge1$, and it remains to consider the situation
where $\kap\le\lam^{-4}$. Since $\kap=q/t\ge t^{-1}$, it suffices to prove
that in this case
  $$ \log(\lam^2+1) - 2(1-\kap)\log\lam
                              \ge H(\kap) - \frac1{2}\kap\log(\lam^4\kap); $$
equivalently,
\begin{equation}\label{e:optcase1loc1}
  \log(\lam^2+1) - 2(1-2\kap)\log\lam \ge H(\kap) - \frac1{2}\kap\log(\kap).
\end{equation}
Since $\log(\lam^2+1)-2(1-2\kap)\log\lam$ is a decreasing function of $\lam$
in the range $4\le\lam\le\kap^{-1/4}$, its minimum value in this range is
  $$ \log(\kap^{-1/2}+1) + \frac12\,\log\kap - \kap\log\kap. $$
Hence \refe{optcase1loc1} will follow from
  $$ \log(\kap^{-1/2}+1) + \frac12\,\log\kap - \kap\log\kap
                                   \ge H(\kap) - \frac1{2}\kap\log(\kap), $$
which simplifies to
  $$ \log(1+\kap^{1/2}) + (1-\kap)\log(1-\kap)
                                           + \frac12\,\kap\log\kap \ge 0, $$
and in this form immediately follows from the fact that the left-hand side is
an increasing function of $\kap$ on the interval $\kap\in(0,1/8]$; hence on
the interval \refe{optcase1}.

\medskip\noindent 2)\,
Next, suppose that
\begin{equation}\label{e:optcase2}
  \frac1{2\lam} \le \kap \le \frac1{\lam+1}
\end{equation}
and, as a result,
\begin{equation}\label{e:optcase2loc1}
  \lam \ge (2\kap)^{-1} \ge \sqrt{\kap^{-1}-1}.
\end{equation}

By Lemma \refl{large-deviation} and  \refe{binest}, we have
  $$ S_q\le\lam^{(1-\kap)t}e^{tH(\kap)} \ \text{and}\ %
              \sig_q\ge\binom tq \ge \frac1{3\sqrt{\kap t}}\,e^{tH(\kap)}. $$
Thus, in view of \refe{optcase2loc1}, the result will follow from
  $$ \lam^{(1-\kap)t}e^{tH(\kap)} \le \frac{\kap^{-5/4}}{\sqrt{t}}
                                 \,(\lam^2+1)^{t/2} e^{\frac12\,tH(\kap)}. $$
By passing to logarithms, dividing through by $t/2$, and rearranging the
terms, this reduces to
  $$ \log(\lam^2+1)-2(1-\kap)\log\lam \ge
                                        H(\kap) + t^{-1}\log(\kap^{5/2}t). $$
Since the expression in the left-hand side is an increasing function of
$\lam$ in the range $\lam\ge\sqrt{\kap^{-1}-1}$, using \refe{optcase2loc1} we
get
\begin{align*}
  \log(\lam^2+1)-2(1-\kap)\log\lam
        &\ge \log(1+4\kap^2)-2\log(2\kap) + 2(1-\kap) \log(2\kap) \\
        &=   \log(1+4\kap^2) - 2\kap\log(2\kap).
\end{align*}
Also,
  $$ t^{-1}\log(\kap^{5/2}t) \le \frac1{e}\,\kap^{5/2} $$
for any $t>0$. Consequently, it suffices to show that
  $$ \log(1+4\kap^2) - 2\kap\log(2\kap)
                             \ge H(\kap) + \frac1{e}\,\kap^{5/2}, $$
and a routine investigation confirms that this holds true for all $\kap\le
1/5$, and therefore for all $\kap$ in the range \refe{optcase2}.

\medskip\noindent 3)\,
Next, suppose that
\begin{equation}\label{e:optcase3}
  \frac1{\lam+1}\le\kap\le \frac12.
\end{equation}
Using the trivial estimates $S_q\le(\lam+1)^t$ and $\sig_q\ge\binom tq$, the
latter in conjunction with \refe{binest}, in this case we reduce
\refe{toprove} to
  $$ (\lam+1)^t \le \frac{2\lam}{\sqrt t\sqrt[4]{\kap}}
                                \,(\lam^2+1)^{t/2} e^{\frac12\,tH(\kap)}, $$
and further to
\begin{equation}\label{e:optilemmaloc5}
  2\log(\lam+1)-\log(\lam^2+1)
                             \le H(\kap) - \frac1t\log(\kap^{1/2}t/(4\lam^2)).
\end{equation}
By \refe{optcase3} we have $\lam\ge\max\{\kap^{-1}-1,4\}$, and in this range
the expression in the left-hand side of \refe{optilemmaloc5} is easily seen
to be a decreasing function of $\lam$. As a result, we have
\begin{align*}
  2\log(\lam+1)-\log(\lam^2+1)
         &\le \log\frac{(1+(\kap^{-1}-1))^2}{1+(\kap^{-1}-1)^2} \\
         &=   -\log(2\kap^2-2\kap+1)\quad\text{if}\ \kap\le\frac15,
\intertext{and}
  2\log(\lam+1)-\log(\lam^2+1)
         &\le \log\frac{25}{17} \quad\text{if}\ \kap\ge\frac15.
\end{align*}
Since, on the other hand,
  $$ \frac1t\log(\kap^{1/2}t/(4\lam^2))
      \le \frac1{4\lam^2e}\,\kap^{1/2} \le \frac1{64e}\,\kap^{1/2}, $$
it suffices to show that
  $$ H(\kap) - \frac1{64e}\,\kap^{1/2} \ge
       \begin{cases}
          -\log(2\kap^2-2\kap+1) \ &\text{if}\ \kap\le\frac15, \\
          \log\frac{25}{17}      \ &\text{if}\ \frac15\le\kap\le\frac12.
           \end{cases} $$
Again, this can be verified by a straightforward computation.

\medskip\noindent 4)\,
Finally, suppose that $\kap\ge 1/2$. In this case we have
 $\sig_q\ge 2^{t-1}$; hence, in view of $S_q\le(\lam+1)^t$, it suffices to
show that
  $$ (\lam+1)^t \le \frac1{\sqrt[4]{t}}\,(2(\lam^2+1))^{t/2}. $$
This can be equivalently rewritten as
  $$ \log(2(\lam^2+1)) - 2\log(\lam+1) \ge \frac1{2t}\,\log t, $$
and the last inequality is immediate from the fact that its right-hand side
does not exceed $1/(2e)$, while the left-hand side is an increasing function
of $\lam$ in the range $\lam\ge 4$, and its value at $\lam=4$ is
$\log(34/25)>1/(2e)$.
\end{proof}

We can now complete our construction of graphs $G$ with $\M(G)$ small (as
compared to $\lmax(G)$).

For integer $s,t\ge 1$, denote by $I_s$ the identity matrix, and by $J_s$ the
all-$1$ matrix of order $s$, and let
  $$ A_s^{(t)}
       := \begin{pmatrix} J_s-I_s & I_s \\ I_s & 0 \end{pmatrix}^{\otimes t}; $$
thus, $A_s^{(t)}$ is a symmetric $(0,1)$-matrix of order $(2s)^t$, with
zeroes on the main diagonal.

It is not difficult to check that the minimal polynomial of $A_s^{(1)}$
is $(x^2-(s-1)x-1)(x^2+x-1)$. Letting $\lambda:=(s-1+\sqrt{(s-1)^2+4})/2$
(the largest root of the polynomial $x^2-(s-1)x-1$), we conclude that the
largest eigenvalue of $A_s^{(1)}$ is equal to $\lam$, while all other
eigenvalues do not exceed $(1+\sqrt 5)/2$ in absolute value. Also, it is
readily verified that the eigenvector corresponding to $\lam$ is
$e:=(\lam\longc\lam,1\longc1)$, with the $2s$ coordinates split evenly
between the values $\lam$ and $1$.

Write $n:=(2s)^t$, and let $\lam_1\longc\lam_n$ denote the eigenvalues of
$A_s^{(t)}$, with $\lam_1$ being the largest eigenvalue. Fix an
orthonormal basis $\{e_1\longc e_n\}$ of $\R^n$ such that $e_i$ is an
eigenvector, corresponding to the eigenvalue $\lam_i$. Since $A_s^{(t)}$
is the $t$th tensor power of $A_s^{(1)}$, we have $\lam_1=\lam^t$, and
$|\lam_i|\le\lam^{t-1}(1+\sqrt 5)/2$ for $2\le i\le n$. Consequently, for
any $\del\in\R^n$,
\begin{align*}
  \|A_s^{(t)}\del\|^2
     &= \lam_1^2 \<\del,e_1\>^2 + \lam_2^2 \<\del,e_2\>^2
                                             \longp \lam_n^2 \<\del,e_n\>^2 \\
     &\ll \lam^{2t} \<\del,e_1\>^2 + \lam^{2(t-1)}
                            \big(\<\del,e_2\>^2 \longp \<\del,e_n\>^2 \big) \\
     &\ll \lam^{2t} \<\del,e_1\>^2 + \frac1{s^2}\, \lam^{2t} \|\del\|^2.
\end{align*}
Since $e_1$ is proportional to the vector $e^{\otimes t}$ having
 $\binom tj\,s^{t}$ coordinates equal to $\lam^j$ for each $j\in[0,t]$, by
Lemma \refl{tensor-optimization} for any $\del\in\{0,1\}^n$ we have
  $$ \<\del,e_1\> \le \frac{4\lam}{\sqrt[4]{t}}\, \|\del\|; $$
as a result,
  $$ \|A_s^{(t)}\del\| \ll \lam^t
                \left( \frac{\lam}{\sqrt[4]{t}} + \frac1s \right) \|\del\|. $$
Observing that $\|A_s^{(t)}\|=\|A_s^{(1)}\|^t=\lam^t$ and choosing $t=s^8$ to
optimize, we get
  $$ \|A_s^{(t)}\del\|
        \ll \left(\frac{\log\log n}{\log n}\right)^{1/8}
                             \,\|A_s^{(t)}\|\|\del\|,\quad \del\in\{0,1\}^n $$
(with an absolute implicit constant).

If we now define $G$ to be the graph of order $n$ with the adjacency matrix
$A_s^{(t)}$, then by~\refe{M-explained},
\begin{align*}
  \M(G) &=   \max_{0\ne\xi,\eta\in\{0,1\}^n}
                         \frac{\<\xi,A_s^{(t)}\eta\>}{\|\xi\|\|\eta\|} \\
        &\le \max_{0\ne\eta\in\{0,1\}^n}
                         \frac{\|A_s^{(t)}\eta\|}{\|\eta\|} \\
        &\ll \left(\frac{\log\log n}{\log n}\right)^{1/8} \,\|A_s^{(t)}\| \\
        &=   \left(\frac{\log\log n}{\log n}\right)^{1/8} \,\lam_{\max}(G),
\end{align*}
as wanted.

\vfill
%\begin{thebibliography}{XXX00}

%\bibitem[CHLL97]{b:chll}
%  {\sc G.~Cohen, I.~Honkala, S.~Litsyn}, and {\sc A.~Lobstein},
%  \emph{Covering codes}.
%  North-Holland Mathematical Library {\bf 54}. North-Holland Publishing Co.,
%  Amsterdam, 1997.

%\end{thebibliography}

\end{document}